\title{Geometric regularity estimates for fully nonlinear elliptic equations with free boundaries}
\author{\it by \smallskip \\
 J.V. da Silva\footnote{\noindent \textsc{Jo\~{a}o Vitor da Silva}. Departamento de Matem\'{a}tica - Instituto de Ci\^{e}ncias Exatas - Universidade de Bras\'{i}lia - UnB, Campus Universit\'{a}rio Darcy Ribeiro, 70910-900, Bras\'{i}lia, DF, Brazil. \noindent \texttt{E-mail: \url{J.V.Silva@mat.unb.br}}} \footnote{\noindent \textsc{Jo\~{a}o Vitor da Silva}. Instituto de Investigaciones Matem\'{a}ticas Luis A. Santal\'{o} (IMAS) - CONICET (Argentine), Ciudad Universitaria, Pabell\'{o}n I (1428) Av. Cantilo s/n - Buenos Aires \texttt{E-mail: \url{jdasilva@dm.uba.ar}}}
, \qquad  R.A. Leit\~{a}o\footnote{\noindent \textsc{Raimundo Alves Leit\~{a}o J\'{u}nior}.
Universidade Federal Cear\'{a} - UFC. Department of Mathematics. Fortaleza - CE, Brazil - 60455-760.
\texttt{E-mail: \url{rleitao@mat.ufc.br}}}\qquad $\&$ \qquad G.C. Ricarte\footnote{\noindent \textsc{Gleydson Chaves Ricarte}.
Universidade Federal Cear\'{a} - UFC. Department of Mathematics. Fortaleza - CE, Brazil - 60455-760. \texttt{E-mail: \url{ricarte@mat.ufc.br}}
}}
\date{}
\newlength{\hchng}
\newlength{\vchng}
\def \R {\mathbb{R}}
\def \dist {\mathrm{dist}}
\def \Leb {\mathscr{L}^N}
\def \tr {\mathrm{Tr}}
\newcommand{\defeq}{\mathrel{\mathop:}=}
\newtheorem{theorem}{Theorem}[section]
\newtheorem{lemma}[theorem]{Lemma}
\newtheorem{corollary}[theorem]{Corollary}
\theoremstyle{definition}
\newtheorem{definition}[theorem]{Definition}
\newtheorem{example}[theorem]{Example}
\theoremstyle{remark}
\newtheorem{remark}[theorem]{Remark}
\numberwithin{equation}{section}
\newcommand{\intav}[1]{\mathchoice {\mathop{\vrule width 6pt height 3 pt depth  -2.5pt
\kern -8pt \intop}\nolimits_{\kern -6pt#1}} {\mathop{\vrule width
5pt height 3  pt depth -2.6pt \kern -6pt \intop}\nolimits_{#1}}
{\mathop{\vrule width 5pt height 3 pt depth -2.6pt \kern -6pt
\intop}\nolimits_{#1}} {\mathop{\vrule width 5pt height 3 pt depth
-2.6pt \kern -6pt \intop}\nolimits_{#1}}}
\begin{document}
\maketitle

\begin{abstract}
In this manuscript we study geometric regularity estimates for problems driven by fully nonlinear elliptic operators (which can be either degenerate or singular when ``the gradient is small'') under strong absorption conditions of the general form:
\begin{equation}\label{eq1}
      \mathrm{G}(x, D u, D^2 u) = f(u)\chi_{\{u>0\}} \quad \mbox{in} \quad \Omega,
\end{equation}
where the mapping $u \mapsto f(u)$ fails to decrease fast enough at the origin, so allowing that non-negative solutions may create plateau regions, that is, \textit{a priori} unknown subsets where a given solution vanishes identically. We establish improved geometric $C_{\text{loc}}^{\kappa}$ regularity along the set $\mathrm{F}_0 = \partial \{u>0\} \cap \Omega$ (the free boundary of the model), for a sharp value of $\displaystyle \kappa \gg 1$ (obtained explicitly) depending only on structural parameters. Non-degeneracy among others measure theoretical properties are also obtained. A sharp Liouville result for entire solutions with controlled growth at infinity is proved. We also present a number of applications consequential of our findings.
\newline
\newline
\noindent \textbf{Keywords:} Fully nonlinear elliptic operators of degenerate/singular type, sharp and improved regularity estimates.
\newline
\noindent \textbf{AMS Subject Classifications:} 35B65, 35J60, 35J70.
\end{abstract}

\newpage

\section{Introduction}

\subsection{Statement of main results}

\hspace{0.6cm}In this manuscript we study regularity issues for reaction-diffusion problems governed by second order nonlinear elliptic equations (possibly of degenerate or singular type) for which a Minimum Principle is not available (see \cite{BdaL} for an essay on this topic):
\begin{equation}\label{DCP1}
\left\{
\begin{array}{rclcc}
  F(x, Du, D^2u) + |Du|^{\gamma}\langle\mathfrak{b}(x), Du\rangle & = & f(u)\chi_{\{u>0\}} & \mbox{in} & \Omega \\
  u(x) & = & g(x) & \mbox{on} & \partial \Omega,
\end{array}
\right.
\end{equation}
where $\Omega \subset \R^N$ is a smooth, open and bounded domain, $0\leq g \in C^0(\partial \Omega)$, $\mathfrak{b} \in C^0(\overline{\Omega}, \R^N)$, $f$ is a continuous and increasing function with $f(0) = 0$ and $F: \Omega \times (\R^N \backslash \{0\}) \times Sym(N) \to \R$ is a second order fully nonlinear elliptic operator with measurable coefficients satisfying:

\begin{enumerate}
\item[{\bf(F1)}]\label{F1}[{\bf $(\lambda, \Lambda, \gamma)$-Ellipticity condition}] There exist constants $ \Lambda \geq \lambda >0$ such that for any $(x,\overrightarrow{p}) \in \Omega \times (\R^N\setminus\{0\})$ and $M, P \in\text{Sym}(N) $, with $P\ge 0$ and $\gamma >-1$ there holds
$$
|\overrightarrow{p}|^{\gamma}\mathcal{P}_{\lambda, \Lambda}^{-}(P)\leq F(x, \overrightarrow{p}, M+P)-F(x,  \overrightarrow{p}, M)\leq  |\overrightarrow{p}|^{\gamma}\mathcal{P}_{\lambda, \Lambda}^{+}(P),
$$
where $\mathcal{P}^{+}_{\lambda,\Lambda}$ and $\mathcal{P}^{-}_{\lambda,\Lambda}$ denote the \textit{Pucci's extremal operators}:
$$
     \mathcal{P}^{+}_{\lambda,\Lambda}(M) := \lambda \cdot \sum_{e_i <0} e_i  + \Lambda \cdot \sum_{e_i >0} e_i \quad \mbox{and} \quad \mathcal{P}^{-}_{\lambda,\Lambda}(M) := \lambda \cdot \sum_{e_i >0} e_i  + \Lambda \cdot \sum_{e_i <0} e_i
$$
and $\{e_i : 1 \le i \le N\}$ are the eigenvalues of $M$.

\item[{\bf(F2)}]\label{F2}[{\bf $\gamma$-Homogeneity condition}] For all $s \in \R^{\ast}$ and $(x,\overrightarrow{p}, M) \in \Omega \times (\R^n\setminus\{0\}) \times Sym(N)$,
$$
F(x, s\overrightarrow{p}, M) = |s|^{\gamma} F(x, \overrightarrow{p}, M),
$$
\end{enumerate}
We recommend reading Ara\'{u}jo \textit{et al}, Birindelli-Demengel and Imbert-Silvestre's works \cite{ART15}, \cite{BerDem}, \cite{BeDe1}, \cite{BD2}, \cite{BD3} and \cite{IS} for a number of examples of operators with such structural properties.

Motivated by several issues coming from pure and applied sciences, we will focus our attention, particularly on prototypes driven by a strong absorption condition, whose standard model is given by
\begin{equation}\label{Maineq}
    \mathrm{G}(x, Du, D^2u) =  F(x, Du, D^2u) + |Du|^{\gamma}\langle\mathfrak{b}(x), Du\rangle = \lambda_0(x).u^{\mu}\chi_{\{u>0\}}(x) \quad \mbox{in} \quad \Omega,
\end{equation}
where $0\leq \mu < \gamma+1$ is the \textit{absorption factor} and $\lambda_0 \in C^0(\overline{\Omega})$ (\textit{Thiele modulus} - bounded away from zero and infinity). Such models are mathematically interesting because they enable the formation of \textit{dead-core} sets, i.e. \textit{a priori} unknown regions where non-negative solutions vanish identically. By way of contrast, the study of \eqref{Maineq} is relevant, not only for its several applications, but especially for its innate relation with a number of relevant free boundary problems in the literature (see \textit{e.g.} \cite{AP}, \cite{ADD16}, \cite{Aris1}, \cite{Aris2}, \cite{DD15}, \cite[Chapter 1]{Diaz}, \cite{FriePhil}, \cite{LeeShah} and \cite{Phil} for some variational examples and \cite{CafSal}, \cite{CafSalShah}, \cite{Tei14} and \cite{Tei18} for a non-variational counterpart). For this reason, understanding the ``intrinsic geometry'' of the former model is an important step in comprising the behavior of dead-core solutions near their free boundary points. We will provide a brief discussion of this soon.

In our first result, we establish the precise asymptotic behavior at which non-negative viscosity solutions leave their dead-core sets. This is an important piece of information in several free boundary problems (see \cite{ALT}, \cite{ART15}, \cite{LeeShah}, \cite{RT} and \cite{Tei4}), and it plays a pivotal role in establishing many weak geometric properties (see Section \ref{Cons} for more details).

\begin{theorem}[{\bf Non-degeneracy}]\label{LGR} Let $u$ be a nonnegative, bounded viscosity solution to \eqref{Maineq} in $B_1$ and let $x_0 \in \overline{\{u >0\}} \cap B_{\frac{1}{2}}$ be a point in the closure of the non-coincidence set. Then for any $0<r<\frac{1}{2}$, there holds
$$
   \displaystyle \sup_{B_r(x_0)} \,u(x) \geq C_{\sharp}\left(N, \Lambda, \inf_{\Omega} \lambda_0(x), \gamma, \mu\right).r^{\frac{\gamma+2}{\gamma+1-\mu}}.
$$
\end{theorem}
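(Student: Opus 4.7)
The exponent $\kappa=\frac{\gamma+2}{\gamma+1-\mu}$ is the unique value that matches scalings in \eqref{Maineq}: for the radial profile $\Phi(x):=C_\sharp|x-x_0|^\kappa$ one has $|D\Phi|^\gamma|D^2\Phi|\sim C_\sharp^{\gamma+1}|x-x_0|^{(\gamma+1)\kappa-(\gamma+2)}$ while $\Phi^\mu\sim C_\sharp^\mu|x-x_0|^{\mu\kappa}$, and equating the two exponents forces this choice. My plan is to promote this heuristic into a rigorous barrier-plus-contradiction argument.

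First I would reduce, by continuity of $u$, to the case $x_0\in\{u>0\}\cap B_{1/2}$; the boundary case $x_0\in\partial\{u>0\}$ then follows by approximation along any sequence $x_k\in\{u>0\}$ with $x_k\to x_0$. Fix $x_0$ as above and $0<r<\frac12$, and suppose toward a contradiction that $\sup_{B_r(x_0)}u<C_\sharp r^\kappa$, where $C_\sharp$ is the constant to be pinpointed. A direct computation of $D\Phi$ and $D^2\Phi$ (whose eigenvalues are $C_\sharp\kappa(\kappa-1)|x-x_0|^{\kappa-2}$ with multiplicity one and $C_\sharp\kappa|x-x_0|^{\kappa-2}$ with multiplicity $N-1$, both positive since $\kappa>1$), together with (F1)--(F2) and absorption of the lower-order drift $|D\Phi|^\gamma\langle\mathfrak{b},D\Phi\rangle$ (which contributes an extra factor of $|x-x_0|$ and is therefore dominated by the main Pucci term in $B_r(x_0)$ with $r<\frac12$), yields
\[
G(x,D\Phi,D^2\Phi)\;\leq\;\mathfrak{A}\,C_\sharp^{\gamma+1}\,|x-x_0|^{\mu\kappa}\qquad\text{in }B_r(x_0)\setminus\{x_0\},
\]
for a constant $\mathfrak{A}=\mathfrak{A}(N,\Lambda,\gamma,\mu,\|\mathfrak{b}\|_\infty)$.

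Next I would analyse $W:=u-\Phi$ on $\overline{B_r(x_0)}$. The contradiction hypothesis forces $W<0$ on $\partial B_r(x_0)$ whereas $W(x_0)=u(x_0)>0$, so $W$ attains a strictly positive maximum at some interior point $z^\star\in B_r(x_0)$. For $z^\star\ne x_0$, $\Phi$ is smooth there and $\Phi+\bigl(u(z^\star)-\Phi(z^\star)\bigr)$ touches $u$ from above at $z^\star$; the viscosity subsolution property of $u$ combined with degenerate ellipticity of $G$ then gives
\[
\lambda_0(z^\star)\,u(z^\star)^\mu\;\leq\;G\bigl(z^\star,D\Phi(z^\star),D^2\Phi(z^\star)\bigr)\;\leq\;\mathfrak{A}\,C_\sharp^{\gamma+1}\,|z^\star-x_0|^{\mu\kappa}.
\]
Since $W(z^\star)\geq W(x_0)>0$ delivers $u(z^\star)\geq\Phi(z^\star)=C_\sharp|z^\star-x_0|^\kappa$, the monotonicity of $t\mapsto t^\mu$ then yields $\inf_{\Omega}\lambda_0\cdot C_\sharp^\mu\leq\mathfrak{A}\,C_\sharp^{\gamma+1}$, i.e.\ $C_\sharp^{\gamma+1-\mu}\geq\inf_{\Omega}\lambda_0/\mathfrak{A}$. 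Fixing $C_\sharp:=\frac12\bigl(\inf_{\Omega}\lambda_0/\mathfrak{A}\bigr)^{1/(\gamma+1-\mu)}$ from the start contradicts the last display and produces the explicit constant appearing in the statement.

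The main obstacle will be the residual scenario $z^\star=x_0$, where $\Phi$ is only $C^1$ with $D\Phi(x_0)=0$ and hence not, \emph{stricto sensu}, an admissible second-order viscosity test function. I would handle this either by smoothing the barrier, e.g.\ $\Phi_\varepsilon(x):=C_\sharp(|x-x_0|^2+\varepsilon^2)^{\kappa/2}$, repeating the argument and sending $\varepsilon\downarrow 0$, or, when $\gamma\geq 0$, by invoking (F2): there $F(x,0,M)\equiv 0$, so the viscosity inequality at a critical point $z^\star=x_0$ collapses to $0\geq\lambda_0(x_0)u(x_0)^\mu>0$, already a contradiction.
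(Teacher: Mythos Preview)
Your barrier $\Phi(x)=C_\sharp|x-x_0|^\kappa$ and the contradiction strategy are exactly those of the paper: the paper scales to $u_r(x)=u(x_0+rx)/r^\kappa$, writes the same radial profile $\Xi(x)=C_\sharp|x|^\kappa$, and then invokes the Comparison Principle (Lemma~\ref{comparison principle}) on $B_1$ to conclude that $u_r\leq\Xi$ on $\partial B_1$ would force $u_r(0)\leq\Xi(0)=0$, contradicting $u_r(0)>0$. So the two arguments differ only in packaging: the paper calls the comparison lemma as a black box, whereas you unpack it by looking at the interior maximum $z^\star$ of $W=u-\Phi$ and testing pointwise.

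That unpacking, however, leaves a genuine gap at the ``residual scenario'' $z^\star=x_0$, and neither of your proposed patches closes it under the definition of viscosity solution used in the paper. First, the regularisation $\Phi_\varepsilon(x)=C_\sharp(|x-x_0|^2+\varepsilon^2)^{\kappa/2}$ is still radially symmetric about $x_0$, so $D\Phi_\varepsilon(x_0)=0$ for \emph{every} $\varepsilon>0$; if the maximum of $u-\Phi_\varepsilon$ remains at $x_0$ you are in precisely the same predicament, now with a $C^2$ but zero-gradient test function. Second, the paper's viscosity definition (Definition~2.1) admits only test functions with $|D\phi(x_0)|\neq 0$; the alternative clause requires $u$ to be locally constant. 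Consequently your appeal to ``$F(x,0,M)\equiv 0$'' and the collapsed inequality $0\geq\lambda_0(x_0)u(x_0)^\mu$ is not licensed by the definition as stated: you simply cannot test with a zero-gradient function here, and for the singular range $-1<\gamma<0$ the formal expression $|D\phi|^\gamma$ does not even vanish. A correct repair would require either an additional linear perturbation $\Phi_\varepsilon+\delta\langle e,x-x_0\rangle$ (so that the gradient at the relocated maximum is nonzero and one can then send $\delta\to 0$), or the equivalence-of-definitions machinery from the Birindelli--Demengel references, neither of which you carry out.

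The cleanest fix is simply to do what the paper does: once you have checked that $\Phi$ (equivalently $\Xi$) is a viscosity supersolution of $\mathcal{G}[v]-\hat\lambda_0 v_+^\mu\leq 0$ in $B_r(x_0)$ and that $u$ is a subsolution, invoke Lemma~\ref{comparison principle} directly; that lemma absorbs the critical-point difficulty internally and yields the contradiction without any pointwise touching argument.
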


In our next result we establish a sharp and improved regularity estimate at free boundary points.

\begin{theorem}[{\bf Improved regularity along free boundary}]\label{IRThm} Let $u$ be a nonnegative and bounded viscosity solution to \eqref{Maineq} and consider $Y_0 \in \partial\{u > 0\} \cap \Omega^{\prime}$ a free boundary point with $\Omega^{\prime} \Subset \Omega$. Then for $r_0 \ll  \min\left\{1, \frac{\dist(\Omega^{\prime}, \partial \Omega)}{2}\right\}$ and any $X \in B_{r_0}(Y_0) \cap \{u > 0\}$ there holds
$$
   u(X)\leq C^{\sharp}.\max\left\{1, \|u\|_{L^\infty(\Omega)}, \|\lambda_0\|_{L^{\infty}(\Omega)}^{\frac{1}{\gamma+1-\mu}}\right\}.|X-Y_0|^{\frac{\gamma+2}{\gamma+1-\mu}},
$$
where $C^{\sharp}>0$ depends only on $\displaystyle N, \lambda, \Lambda, \gamma, \mu, \|\mathfrak{b}\|_{L^{\infty}(\Omega)}, \|\lambda_0\|_{L^{\infty}(\Omega)}$ and $\dist(\Omega^{\prime}, \partial \Omega)$.
\end{theorem}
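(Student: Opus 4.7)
The exponent $\kappa := \frac{\gamma+2}{\gamma+1-\mu}$ is the unique scaling exponent preserving the structural class of \eqref{Maineq} under the rescaling $u \mapsto v(x) := \rho^{-\kappa} u(\rho x)$. Using the $\gamma$-homogeneity (F2) together with the $(\lambda,\Lambda,\gamma)$-ellipticity (F1), a direct computation shows that such a $v$ satisfies a PDE of the same form in $B_{\rho^{-1}}$, with the drift and absorption coefficients rescaled by positive powers of $\rho$ that vanish as $\rho \to 0$. My plan is to exploit this scale invariance through a discrete iterative flatness scheme centered at the free boundary point.

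After translating $Y_0$ to the origin and normalizing so that $\|u\|_{L^\infty(B_1)} \leq 1$ and $\|\lambda_0\|_\infty \leq 1$ (the factor $\max\{1,\|u\|_\infty,\|\lambda_0\|_\infty^{1/(\gamma+1-\mu)}\}$ in the statement arising from reversing the normalization), the theorem reduces to the discrete estimate
$$\sup_{B_{\tau^k}} u \leq K \tau^{k\kappa}, \qquad k = 0, 1, 2, \ldots,$$
for some universal $\tau \in (0,1/2)$ and $K \geq 1$. Granting this, the full conclusion follows by sandwiching an arbitrary $|X - Y_0|$ between two consecutive geometric scales $\tau^{k+1} < |X-Y_0| \leq \tau^k$. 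The discrete estimate is proved by induction on $k$, with a trivial base case; assuming it at level $k$, I would work with the rescaled nonnegative solution $v(x) := (K\tau^{k\kappa})^{-1} u(\tau^k x)$, which lies in $[0,1]$ on $B_1$, vanishes at the origin, and solves a PDE of the same form as \eqref{Maineq} but whose drift and absorption coefficients carry positive powers of $\tau^k$ and therefore shrink as the induction progresses.

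The crucial ingredient is a one-step flatness lemma: there exist universal constants $\tau \in (0,1/2)$ and $\delta > 0$ such that every nonnegative viscosity solution $w$ of a PDE in this class in $B_1$, with $w(0)=0$, $\|w\|_{L^\infty(B_1)} \leq 1$ and coefficients below $\delta$, satisfies $\sup_{B_\tau} w \leq \tau^\kappa$. I would prove it by compactness and contradiction: a violating sequence converges, via the $C^{1,\alpha}$ estimates available for these degenerate/singular operators (Birindelli--Demengel \cite{BerDem, BeDe1, BD2}, Imbert--Silvestre \cite{IS}), to a nonnegative limit $w_\infty$ satisfying a \emph{homogeneous} fully nonlinear elliptic equation $F_\infty(x, Dw_\infty, D^2 w_\infty)=0$ with $w_\infty(0)=0$ and $\sup_{B_\tau} w_\infty \geq \tau^\kappa > 0$; the strong minimum principle for this homogeneous limit equation then forces $w_\infty \equiv 0$, contradicting the positivity of its supremum.

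The principal obstacle is precisely this compactness/rigidity step. As emphasized in the introduction, a minimum principle \emph{fails} for \eqref{Maineq} itself because of the absorption term $\lambda_0 u^\mu$, so the rigidity of the limit must come from the fact that the rescaling has suppressed exactly that term. Care is required to verify three things: first, that the compactness inputs (equicontinuity and $C^{1,\alpha}$ bounds) hold uniformly across both the degenerate ($\gamma>0$) and the singular ($-1<\gamma<0$) regimes; second, that the limit operator $F_\infty$ inherits a form of degenerate ellipticity to which a strong minimum principle applies; and third, that the initial normalization of $\|u\|_\infty$ can be chosen small enough so that the finitely many first steps of the induction (for which the rescaled coefficients are not yet below $\delta$) can be dispatched by a crude comparison, while the tail of the iteration benefits from the built-in coefficient smallness.
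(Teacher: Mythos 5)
Your overall architecture --- translate/normalize the free boundary point, establish a one-step flatness lemma for the normalized class, then iterate geometrically to get the discrete decay $\sup_{B_{\tau^k}} u \lesssim \tau^{k\kappa}$ and finally interpolate between consecutive scales --- is exactly the one the paper uses (Definition \ref{FineClass}, Remark \ref{rem1}, and the proof of Theorem \ref{IRThm}). The genuine divergence is in how you propose to prove the crucial one-step flatness lemma. You argue by compactness and contradiction: extract a sequence of violators, use $C^{1,\alpha}$ (or at least Krylov--Safonov $C^\alpha$) equicontinuity to pass to a limit $w_\infty$ solving a \emph{homogeneous} degenerate/singular equation, and then kill it with the strong minimum principle (as in \cite{BdaL}). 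The paper instead proves Lemma \ref{FlatLemma} in one stroke by applying the sharp Harnack inequality of Theorem \ref{harnack}: since $u(x_0)=0$ forces $\inf_{B_{r_0}(x_0)} u = 0$ and the right-hand side is bounded by $\delta_\iota$, Harnack gives $\sup_{B_{r_0}(x_0)} u \leq C\,\delta_\iota^{1/(\gamma+1)}$, and one just shrinks $\delta_\iota$. Both routes lead to the same lemma, but the Harnack route is more elementary (no sequence, no limit operator, no higher-order compactness), and it works under (F1)--(F2) alone, whereas your compactness step effectively needs the continuity condition (F3) to make sense of the limit operator $F_\infty$ and to invoke the $C^{1,\alpha}$ theory uniformly. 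Your compactness scheme is a legitimate alternative (and is indeed what several of the cited references do in related contexts), so this is a difference of method, not a gap.

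Two small points to correct. First, you write that the absorption coefficient of $v(x)=(K\tau^{k\kappa})^{-1}u(\tau^k x)$ ``carries positive powers of $\tau^k$ and therefore shrinks as the induction progresses.'' In fact, because $\kappa(\gamma+1-\mu)=\gamma+2$, the $\tau^{k(\gamma+2)}$ factors cancel and the absorption coefficient becomes $K^{-(\gamma+1-\mu)}\lambda_0(\tau^k x)$: it is \emph{scale-invariant}, not shrinking. Only the drift $\hat{\mathfrak b}(x)=\tau^k\mathfrak b(\tau^k x)$ actually shrinks. This does not break your argument --- invariance is precisely what allows the lemma to be applied repeatedly with the same $\delta$ --- but your ``third'' caveat about ``finitely many first steps'' is then unnecessary: once the initial normalization (picking $K$ and the initial ball radius) puts the absorption below threshold, it stays there, and no crude comparison on early steps is needed. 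Second, for the contradiction step you should note that the strong minimum principle needs to be applied to a nonnegative $w_\infty$ attaining the value $0$ at an interior point; the relevant result for operators satisfying (F1)--(F2) with vanishing right-hand side is in \cite{BdaL}, which the paper cites for exactly this reason, so this ingredient is available but deserves an explicit citation in your write-up.
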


 To the best of our knowledge and as far as degenerate/singular elliptic models in non-divergence form are concerned, Theorem \ref{IRThm} is a novelty even for dead-core problems as follows
$$
  |Du|^{\gamma}\left(\tr(\mathrm{A}(x)D^2 u) + \langle\mathfrak{b}(x), Du\rangle\right)= \lambda_0(x)u^{\mu}\chi_{\{u>0\}} \,\,\, \text{for} \,\,\, 0 \neq \gamma >-1.
$$

 The insights behind the proof of Theorem \ref{IRThm} one resume to two important steps: a (finer) asymptotic blow-up analysis and the use of a sharp \textit{a priori} bound device (via Harnack type inequality). Heuristically, for an appropriate family of normalized and scaled solutions, if the ``magnitude'' of the \textit{Thiele modulus} is under control (with small enough bound), then we must expect the following iterative geometric decay (for a universal constant $C_0>0$)
 $$
    \displaystyle \sup_{B_{r_0^k}(x_0)} u(x) \leq C_0 r_0^{k\left(\frac{\gamma+2}{\gamma+1-\mu}\right)}
    \,\, \forall \,\,x_0 \in \partial\{u>0\}\cap \Omega^{\prime}, \,\,r_0 \ll 1, \text{and} \,\,k \in \mathbb{N},
 $$
which will yield the desired geometric regularity estimate along free boundary points (see Section \ref{ImpRegEst}).

We are also able to provide a crystal clear answer as to what will happen with solutions of \eqref{Maineq} when $\mu = \gamma+1$. It is worthwhile to mention that in such a scenario, the former regularity estimates ``collapse''. For this reason, analyzing such a borderline setting is a delicate and challenging task.

\begin{theorem}\label{UCPthm} Let $u$ be a non-negative, bounded viscosity solution to \eqref{Maineq} with $\mu = \gamma+1$. Then, the following dichotomy holds: either $u>0$ or $u \equiv 0$ in $\Omega$.
\end{theorem}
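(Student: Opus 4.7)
I argue by contradiction. Assuming $\Omega$ is connected, if neither $\{u>0\}$ nor $\{u=0\}$ is empty, then continuity of $u$ produces a free boundary point $Y_0 \in \partial\{u>0\}\cap \Omega$. For every sufficiently small $r>0$ the quantity $M_r := \sup_{B_r(Y_0)} u$ is strictly positive, because $Y_0$ lies in the closure of the positivity set; moreover, $u(Y_0)=0$ by continuity and the openness of $\{u>0\}$. I then introduce the normalized rescalings
\[
v_r(x) := \frac{u(Y_0+rx)}{M_r}, \qquad x\in B_1,
\]
for which $0\le v_r\le 1$, $\|v_r\|_{L^\infty(B_1)}=1$ and $v_r(0)=0$.

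Using the $\gamma$-homogeneity (F2), a direct calculation shows that $v_r$ is a viscosity solution of
\[
\widetilde F_r(x, Dv_r, D^2 v_r) + r\,|Dv_r|^{\gamma}\langle \mathfrak{b}(Y_0+rx), Dv_r\rangle = r^{\gamma+2}\,\lambda_0(Y_0+rx)\,v_r^{\gamma+1}\chi_{\{v_r>0\}},
\]
where $\widetilde F_r(x,\vec p, M) := (r^2/M_r)\,F(Y_0+rx,\vec p,(M_r/r^2)M)$ inherits the $(\lambda,\Lambda,\gamma)$-ellipticity (F1) with exactly the same constants as $F$. The role of the borderline assumption $\mu=\gamma+1$ now becomes transparent: it is the unique exponent for which both the drift and the absorption in the rescaled equation multiply \emph{strictly positive} powers of $r$ (namely $r$ and $r^{\gamma+2}$, recall $\gamma>-1$), so that each term converges uniformly to zero on $B_1$ as $r\to 0^+$. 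For $\mu<\gamma+1$ one would recover instead the non-trivial limiting profile underlying the non-degeneracy of Theorem~\ref{LGR}, whereas for $\mu>\gamma+1$ the rescaled source would blow up.

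Since the family $\{\widetilde F_r\}$ enjoys a uniform $(\lambda,\Lambda,\gamma)$-elliptic structure, the local H\"older regularity theory for operators of this class (cf.\ the references quoted after (F2)) gives equicontinuity of $\{v_r\}$. Extracting a subsequence, $v_r \to v_\infty$ locally uniformly in $B_1$, with $v_\infty\ge 0$, $v_\infty(0)=0$ and $\|v_\infty\|_{L^\infty(B_1)}=1$. Standard stability of viscosity solutions transmits the extremal bounds to the limit, yielding the two-sided Pucci inequalities
\[
|Dv_\infty|^{\gamma}\mathcal{P}^{-}_{\lambda,\Lambda}(D^2 v_\infty) \le 0 \le |Dv_\infty|^{\gamma}\mathcal{P}^{+}_{\lambda,\Lambda}(D^2 v_\infty) \qquad \text{in } B_1,
\]
in the viscosity sense. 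The strong minimum principle for such degenerate/singular Pucci operators (valid in the whole range $\gamma>-1$) then forces $v_\infty\equiv 0$, contradicting $\|v_\infty\|_{L^\infty(B_1)}=1$.

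The main delicate step is exactly this last one: the strong minimum principle has to be invoked with the gradient weight $|Dv_\infty|^{\gamma}$, covering uniformly both the degenerate regime $\gamma\ge 0$ and the singular regime $-1<\gamma<0$, and this is the place where the proof genuinely depends on the structural conditions (F1)--(F2). All other ingredients (scaling invariance of the ellipticity class, compactness, stability) are standard once the critical cancellation $\mu-(\gamma+1)=0$ is used to kill the source term in the blow-up limit.
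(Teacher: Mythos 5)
The rescaling computation is correct and the borderline exponent does kill the source and drift as $r\to 0^{+}$, but the argument breaks at the step where you assert $\|v_\infty\|_{L^\infty(B_1)}=1$. Local uniform convergence on compact subsets of $B_1$ does not preserve the normalization $\sup_{B_1}v_r=1$, and in this borderline case there is no non-degeneracy estimate (Theorem \ref{LGR} is unavailable when $\mu=\gamma+1$) to prevent the supremum from escaping to $\partial B_1$. In fact it \emph{does} escape: applying the paper's own Harnack inequality (Theorem \ref{harnack}) to $v_r$ on $B_1$, using that $v_r\ge 0$, $v_r(0)=0$ and that the rescaled source satisfies $\|r^{\gamma+2}\lambda_0(Y_0+r\,\cdot)\,v_r^{\gamma+1}\|_{L^\infty(B_1)}\le r^{\gamma+2}\|\lambda_0\|_\infty$, one gets
\[
\sup_{B_{1/2}}v_r \;\le\; C\Bigl(\inf_{B_{1/2}}v_r + \bigl(r^{\gamma+2}\|\lambda_0\|_\infty\bigr)^{\frac{1}{\gamma+1}}\Bigr) \;=\; C\,r^{\frac{\gamma+2}{\gamma+1}}\|\lambda_0\|_\infty^{\frac{1}{\gamma+1}} \;\longrightarrow\; 0,
\]
so every blow-up limit $v_\infty$ vanishes on compact subsets of $B_1$. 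The conclusion $v_\infty\equiv 0$ that you then extract from the strong minimum principle is therefore not a contradiction — it is the truth — and the argument produces nothing. The moral is that in the critical regime the blow-up sequence genuinely degenerates; this is precisely why the paper avoids a blow-up altogether and instead combines (i) an explicit exponential barrier that forces $u$ to leave its free boundary at \emph{at least a linear} rate (through the gradient lower bound $\iota^\ast$ in \eqref{gradTheta} and the comparison \eqref{cprin}), with (ii) an application of Theorem \ref{IRThm} to the rewritten equation $\mathrm{G}(x,Du,D^2u)=h(x)u_+^{\mu}$ with $h:=\lambda_0 u_+^{\gamma+1-\mu}\in L^\infty$ for a fixed $\mu\in(\max\{0,\gamma\},\gamma+1)$, yielding the superlinear decay $\sup_{B_r(z)}u\le C\,r^{\gamma+2}$. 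These two rates are incompatible for $r$ small, which gives the contradiction. If you wish to keep a blow-up flavour, you would have to inject exactly this kind of two-sided rate information into the sequence (e.g. renormalize by a quantity controlled on both sides); as written, with normalization by $M_r=\sup_{B_r(Y_0)}u$ alone, the compactness step yields only the trivial limit.
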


We also find the sharp (and improved) rate, which gradient decays at interior free boundary points.

\begin{theorem}[{\bf Sharp gradient decay}]\label{IRresult2} Let $u$ be a bounded non-negative viscosity solution to \eqref{Maineq}. Then, for any point $z \in \partial \{u > 0\} \cap \Omega^{\prime}$ for $\Omega^{\prime} \Subset \Omega$, there exists a universal constant $C>0$ such that
$$
   \displaystyle \sup_{B_r(z)} |D u(x)| \leq  Cr^{\frac{1+\mu}{\gamma+1-\mu}} \quad \text{for all} \quad  0<r \ll \min\left\{1, \frac{\dist(\Omega^{\prime}, \partial \Omega)}{2}\right\}.
$$
\end{theorem}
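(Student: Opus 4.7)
The plan is to combine the sharp $C^{\kappa}$ bound from Theorem \ref{IRThm} with a rescaling argument that falls within the interior $C^{1,\beta}$ regularity theory for degenerate/singular fully nonlinear operators satisfying \textbf{(F1)}--\textbf{(F2)} (as developed by Birindelli--Demengel, Imbert--Silvestre, and the subsequent improvements referenced in this manuscript). The exponent $\frac{1+\mu}{\gamma+1-\mu}$ is precisely $\kappa-1$ for $\kappa=\frac{\gamma+2}{\gamma+1-\mu}$, which strongly suggests that the right object to study is the parabolic-type rescaling
$$
v_r(x) \;\defeq\; \frac{u(z+rx)}{r^{\kappa}}, \qquad \kappa=\frac{\gamma+2}{\gamma+1-\mu},
$$
for $z\in\partial\{u>0\}\cap\Omega'$ and $r\ll \min\{1,\tfrac12\dist(\Omega',\partial\Omega)\}$.

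First I would verify that $v_r$ solves, in the viscosity sense in $B_1$, a scaled equation of the same structural type. A direct computation using the $\gamma$-homogeneity condition \textbf{(F2)} together with the exponent identity $(\kappa-1)(\gamma+1)-\kappa\mu=1$ and $(\kappa-1)\gamma+(\kappa-2)=\kappa\mu$ shows that $v_r$ satisfies
$$
\tilde F_r(x,Dv_r,D^2v_r) + r\,|Dv_r|^{\gamma}\langle\mathfrak{b}(z+rx),Dv_r\rangle \;=\; \lambda_0(z+rx)\,v_r^{\mu}\chi_{\{v_r>0\}} \quad\text{in } B_1,
$$
where $\tilde F_r(x,\vec{p},M)\defeq r^{-\kappa\mu}F(z+rx,r^{\kappa-1}\vec{p},r^{\kappa-2}M)$ still fulfills \textbf{(F1)} and \textbf{(F2)} with the \emph{same} constants $\lambda,\Lambda,\gamma$. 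The lower order drift acquires a small factor $r\le1$, so it is uniformly controlled by $\|\mathfrak{b}\|_{L^\infty}$, and the right-hand side is uniformly bounded by $\|\lambda_0\|_{L^\infty}\|v_r\|_{L^\infty}^{\mu}$.

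Next I would invoke Theorem \ref{IRThm} at the free boundary point $z$: for $x\in B_1$ one has $u(z+rx)\le C^{\sharp}M_0\,(r|x|)^{\kappa}$, where $M_0$ collects the universal quantities appearing in that theorem. Dividing by $r^{\kappa}$ yields $0\le v_r\le C^{\sharp}M_0$ in $B_1$, uniformly in $r$. At this stage $v_r$ is a nonnegative, uniformly bounded viscosity solution of a degenerate/singular fully nonlinear equation with bounded right-hand side, inside the very class \textbf{(F1)}--\textbf{(F2)} for which interior $C^{1,\beta}$ regularity with a universal estimate is available (cf.\ the references \cite{ART15}, \cite{BerDem}, \cite{BeDe1}, \cite{BD2}, \cite{BD3}, \cite{IS} cited in this paper). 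Applying this theorem at the origin gives
$$
\|Dv_r\|_{L^{\infty}(B_{1/2})} \;\le\; C\Big(N,\lambda,\Lambda,\gamma,\|\mathfrak{b}\|_{L^{\infty}},\|\lambda_0\|_{L^{\infty}},\|u\|_{L^{\infty}(\Omega)}\Big),
$$
with $C$ independent of $r$.

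Scaling back, from $Du(z+rx)=r^{\kappa-1}Dv_r(x)$ I conclude
$$
\sup_{B_{r/2}(z)}|Du| \;=\; r^{\kappa-1}\|Dv_r\|_{L^{\infty}(B_{1/2})} \;\le\; C\,r^{\kappa-1} \;=\; C\,r^{\frac{1+\mu}{\gamma+1-\mu}},
$$
which is the claimed sharp decay (after harmlessly renaming $r/2\mapsto r$ and adjusting the constant). The main technical point I expect to require the most care is step two: keeping the constants in the scaled equation \emph{truly uniform in} $r$, in particular checking that the universal $C^{1,\beta}$ estimate from the cited literature can be applied with constants that depend only on the structural data of $F$ and not on $r$ (the drift coefficient shrinks by a factor $r$, so this is favorable rather than an obstruction). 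Once that uniform regularity is in place, the sharp exponent $\kappa-1=\frac{1+\mu}{\gamma+1-\mu}$ is produced automatically by the scaling identity.
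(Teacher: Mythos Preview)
Your proposal is correct and follows essentially the same route as the paper: define the blow-up $v_r(x)=u(z+rx)/r^{\kappa}$, verify it solves a scaled equation with the same structural constants (the paper writes the scaled operator as $\mathcal{G}(x,\vec p,M)=r^{\frac{\gamma-2\mu}{\gamma+1-\mu}}F(z+rx,\vec p,r^{-\frac{\gamma-2\mu}{\gamma+1-\mu}}M)$, which is equivalent to your $\tilde F_r$ via \textbf{(F2)}), invoke Theorem~\ref{IRThm} to get $\|v_r\|_{L^\infty(B_1)}\le C$ uniformly in $r$, and then apply the interior gradient estimate (the paper uses Theorem~\ref{GradThm}, which is exactly the $C^{1}$ bound you cite) before scaling back. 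Your verification of the exponent identities $(\kappa-1)\gamma+(\kappa-2)=\kappa\mu$ and $(\kappa-1)(\gamma+1)-\kappa\mu=1$ is precisely what guarantees the uniformity-in-$r$ you flag as the delicate point.
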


In the following, we present a sharp Liouville type result, which holds provided that entire solutions satisfy a controlled growth condition at infinity (cf. \cite[Theorem 3.1]{BerDem}).

\begin{theorem}[{\bf Liouville}]\label{Liouville} Let $u$ be a non-negative viscosity solution to \eqref{Maineq} in $\R^N$. Then, $u\equiv 0$ provided that
\begin{equation}\label{cond thm B1}
\displaystyle \limsup_{|x| \to \infty} \frac{u(x)}{|x|^{\frac{\gamma+2}{\gamma+1-\mu}}} < \left[\inf_{\R^N} \lambda_0(x) \frac{ \left(\gamma+1 - \mu \right)^{\gamma + 2} }{N \Lambda\left(\mu+1 \right) \left( \gamma+2\right)^{\gamma + 1}}\right]^{\frac{1}{\gamma+1 - \mu}}.
\end{equation}
\end{theorem}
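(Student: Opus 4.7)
The plan is to argue by contradiction via a rescaling that reduces the statement to the non-degeneracy estimate of Theorem~\ref{LGR}. Suppose $u \not\equiv 0$ and pick $x_0 \in \R^N$ with $u(x_0) > 0$, so $x_0 \in \overline{\{u > 0\}}$. For $R > 1$, I would introduce
$$
v_R(y) := R^{-\alpha}\, u(x_0 + Ry), \qquad \alpha := \frac{\gamma+2}{\gamma+1-\mu}.
$$
The defining identity $(\gamma+1-\mu)\alpha = \gamma+2$ together with the $\gamma$-homogeneity $(F2)$ ensures (a direct chain-rule computation) that $v_R$ is a non-negative viscosity solution on $B_1$ of an equation of the same structural type as \eqref{Maineq}, with the same ellipticity constants $\lambda,\Lambda,\gamma,\mu$, source bounded below by $\inf_{\R^N}\lambda_0$, and a rescaled drift that does not disturb the leading-order barrier computation on $B_1$.

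Since $v_R(0) > 0$, the origin lies in $\overline{\{v_R > 0\}} \cap B_{1/2}$, so applying Theorem~\ref{LGR} to $v_R$ at $y=0$ with radius, say, $\tfrac14$ yields $\sup_{B_{1/4}} v_R \geq C_\sharp\cdot(1/4)^\alpha$, where $C_\sharp$ is the \emph{explicit} non-degeneracy constant produced by evaluating $\mathrm{G}(\cdot, D\Psi, D^2\Psi)$ on the radial profile $\Psi(x) = c|x|^\alpha$ in the proof of Theorem~\ref{LGR}; this constant is precisely the right-hand side of \eqref{cond thm B1}. Unwinding the scaling provides $z_R \in B_{R/4}(x_0)$ with $u(z_R) \geq C_\sharp(R/4)^\alpha$. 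On the other hand, \eqref{cond thm B1} supplies $\eta > 0$ and $R_0 > 0$ such that $u(x) \leq (C_\sharp - \eta)|x|^\alpha$ whenever $|x| \geq R_0$; inserting $x = z_R$ for $R \gg 1$ and dividing through by $(R/4)^\alpha$ gives
$$
C_\sharp \leq (C_\sharp - \eta)\bigl(1 + 4|x_0|/R\bigr)^\alpha \longrightarrow C_\sharp - \eta
$$
as $R \to \infty$, a contradiction, so $u \equiv 0$.

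The main obstacle is bookkeeping: the non-degeneracy constant produced in the proof of Theorem~\ref{LGR} must be shown to equal exactly the right-hand side of \eqref{cond thm B1}, which requires tracking the explicit computation of $|D\Psi|^\gamma \mathcal{P}^+_{\lambda,\Lambda}(D^2\Psi)$ on $\Psi(x) = c|x|^\alpha$ and identifying $c = C_\sharp$ as the critical constant making $\Psi$ a sub-solution of the frozen equation $\mathrm{G}(\cdot, D\Psi, D^2\Psi) \geq \inf_{\R^N}\lambda_0\cdot\Psi^\mu$; the factor $(\gamma+1-\mu)^{\gamma+2}/[N\Lambda(\mu+1)(\gamma+2)^{\gamma+1}]$ then falls out of the algebra. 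An alternative route that bypasses Theorem~\ref{LGR} is a direct comparison argument: for any $C > C_\sharp$, the function $\Psi(x) := C|x-x_0|^\alpha$ is a classical supersolution of the frozen equation on $\R^N\setminus\{x_0\}$; the growth hypothesis \eqref{cond thm B1} forces $\Psi \geq u$ on $\partial B_R(x_0)$ for $R \gg 1$, and a comparison principle for degenerate/singular fully nonlinear operators (in the spirit of Birindelli--Demengel) propagates the inequality inward to give $u(x_0) \leq \Psi(x_0) = 0$, again contradicting $u(x_0) > 0$.
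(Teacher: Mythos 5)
Your primary route (rescaling plus Theorem~\ref{LGR}) is genuinely different from the paper's argument. The paper's proof works by exhaustion: on each large ball $B_{s_0}$ it solves an auxiliary Dirichlet problem to obtain $w$, compares $u \leq w$ by Lemma~\ref{comparison principle}, then compares $w$ against the \emph{shifted} radial barrier $v(x) = \Theta(N,\Lambda,\lambda_0,\gamma,\mu)\,(|x|-r_0)_+^{\frac{\gamma+2}{\gamma+1-\mu}}$; because the hypothesis \eqref{cond thm B1} forces the shift $r_0 = s_0 - \rho$ to satisfy $r_0 \to \infty$ along the exhaustion, the dead core of the barrier eventually swallows any fixed point, giving $u \equiv 0$. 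Your argument instead produces a contradiction by blowing down around a putative positivity point and applying the non-degeneracy estimate, and so it avoids both the auxiliary $w$ and the shifted barrier; what it buys is a cleaner logical structure that recycles a theorem already proved. Your second ``alternative route'' (direct comparison with the unshifted $\Psi(x) = C|x-x_0|^\alpha$) is essentially the paper's mechanism stripped of the $w$ intermediary and the shift, and it works because the shift is only needed to exhibit the vanishing of $u$ everywhere at once rather than at the chosen center.

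One point deserves more care than you give it. In the rescaling $v_R(y) = R^{-\alpha}u(x_0+Ry)$ the drift becomes $\hat{\mathfrak{b}}(y) = R\,\mathfrak{b}(x_0+Ry)$, whose sup norm grows like $R$; your parenthetical remark that this ``does not disturb the leading-order barrier computation on $B_1$'' is not automatic, because the drift contribution $|D\Xi|^\gamma \langle \hat{\mathfrak{b}}, D\Xi\rangle$ scales like $\|\hat{\mathfrak{b}}\|_\infty\,|x|^{\alpha\mu+1}$, which is only one power of $|x|$ below the main term and so is not negligible when $\|\hat{\mathfrak{b}}\|_\infty \to \infty$. Your argument goes through because you are entitled to take the paper's Theorem~\ref{LGR} at face value, and its stated constant $C_\sharp$ depends only on $N,\Lambda,\inf\lambda_0,\gamma,\mu$ (not on $\mathfrak{b}$); but you should flag explicitly that you are invoking this drift-independence of $C_\sharp$, since this is exactly what makes the blow-down argument legitimate. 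You also correctly identify that $C_\sharp$ equals the right-hand side of \eqref{cond thm B1}; this is visible from the comparison function $\Xi$ in the proof of Theorem~\ref{LGR}.
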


\subsection{Motivation and first insights into the theory}

  \hspace{0.6cm} Throughout the last decades a number of reaction-diffusion elliptic equations have
  emerged as models of many phenomena coming from pure and applied sciences, where some remarkable examples appear in chemical reactions, physical-mathematical phenomena, biological processes and population dynamics just to name a few. Notwithstanding, reaction-diffusion processes with one-phase transition, i.e. with sign constraint, are often more interesting from the applied point of view, since they constitute the only significant (realistic) situation. An illustrative example coming from certain (stationary) isothermal, and irreversible catalytic reaction-diffusion process (see \textit{e.g.} \cite{Aris1}, \cite{Aris2} and \cite[Chapter 1]{Diaz} for remarkable surveys) is
\begin{equation}\label{DCP}
\left\{
\begin{array}{rclcl}
     -\Delta u(x) + \lambda_0.f(u)\chi_{\{u>0\}}& = & 0 & \mbox{in} & \Omega \\
     u(x) & = & 1 & \mbox{on} & \partial \Omega,
\end{array}
\right.
\end{equation}
  where $\Omega \subset \R^N$ is a regular and bounded domain, $f: \R_{+} \to \R_{+}$ is a continuous and increasing reaction term, with $f(0) \geq 0$. For such a model, $u$ stands the concentration of a certain chemical reagent (or gas) under a prescribed isothermal flow on the boundary. Furthermore, $f(u)$ represents the ratio of reaction rate at concentration $u$ to reaction rate at concentration unity, and $\lambda_0>0$ (the \textit{Thiele modulus}) controls the ratio of reaction rate to diffusion-convection rate.

 Recall that when $f \in C^{0,1}(\R_{+})$, it follows from the Maximum Principle that nonnegative solutions of \eqref{DCP} must be strictly positive. However, if $f$ fails to be Lipschitz (or even not decays fast enough at the origin), \textit{e.g.} as $f(t) = t^q$ with $q \in [0, 1)$, then such solutions may exhibit plateaus zones, also known as \textit{dead core} sets, i.e. regions $\Omega_0 \Subset \Omega$ where solutions vanish identically. From a physical-mathematical point of view, such a phenomenon revels that where such a solution is vanishing, it delineates a region (\textit{a priori} unknown) where no diffusion process is present (i.e. the chemical substance is ``wasted''). Such a feature enable us to treat \eqref{DCP} as a problem with free boundaries.

  Now, let us move towards the theory of non-divergence form elliptic equations of degenerate/singular type. Recall that solutions to such PDEs in general have a lack of good regularity estimates (\textit{e.g.} $p$-Laplacian operator). Indeed, several mathematical models involving such operators have their ``ellipticity factor'' collapsing along an unknown set, the \textit{free boundary}. Particularly, this phenomenon implies less diffusivity for the model near such a set, and consequently regularity properties of solutions one become, not quite so simple to be established. A typical case of such a phenomenon occurs as the governing operator is anisotropic and the ``modulus of ellipticity'' degenerates along the set of critical points of solutions, i.e.
$$
     \mathcal{C}_{\Omega}(u) \defeq \left\{x \in \Omega: Du(x)=\overrightarrow{0}\right\}.
$$
An interesting and simple example is given by the model
$$
    \mathrm{G}(x, Du, D^2 u) = \mathfrak{g}(x, |Du|)\tr(\mathrm{A}(x)D^2 u) = \mathfrak{f}(u) \quad \text{in} \quad \Omega,
$$
where $\mathfrak{f}: \R \to [0, \infty)$ and $\mathfrak{g}:\Omega \times [0, \infty) \to [0, \infty]$ are continuous functions with $g(x, 0) = 0$ (or $g(x, 0) = +\infty$), $\mathrm{A} \in C^0(\overline{\Omega}, \R^{2N})$ (uniformly elliptic matrix) and $u \mapsto \mathfrak{f}(u)$ is non-decreasing with $\mathfrak{f}(s) = 0$ for every $s\leq 0$. Notice that solutions for such a model might only be ``irregular'' if $|Du|$ is small or large enough. So the delicate question here consists of understanding the precise (local) behavior of solutions close theirs two free boundaries, namely the physical free boundary $\partial\{u>0\}$ and the ``non-physical'' one $\partial \{|Du| > 0\}$ when such a model presents ``absorption factor'' ($f(t) \approx t_{+}^{\mu}$) lesser than the homogeneity degree of governing operator (\textit{e.g.} as $g(x, s)\approx s^{\gamma}$), so enabling the absence of strong maximum principle (compare with \cite{BdaL} for the validity of the strong maximum principle). Therefore, the understanding of such an interplay is essential in order to study (finer) regularity issues to \eqref{Maineq}.

 Despite of the fact that there is a huge amount of literature on dead-core problems in divergence form and theirs qualitative features, quantitative counterparts in non-variational elliptic models like \eqref{Maineq} are far less studied due to the rigidity of the structure of such operators (see \cite{ALT}, \cite{daSO} and \cite{Tei4} as enlightening examples). Therefore, the treatment of such free boundary problems requires the development of new ideas and modern techniques. This lack of investigation has been our main impetus in studying fully nonlinear models with non-uniformly elliptic (anisotropic) structure under strong absorption conditions, which focus on a modern, systematic and non-variational approach for a general class of problems with free boundaries.

\subsection{An overview on our results and their connections with other theories}

\hspace{0.6cm}Our analysis relies strongly on methods of the so-termed \textit{Geometric Regularity Theory}: an approach to regularity theory was born in the fully nonlinear setting, with the Caffarelli's seminal work (see \cite{C89} and  \cite{CC95}). Currently, Teixeira's outstanding survey \cite{Tei2} summarizes the  \textit{state of the art} for such techniques and methods coming from Geometric Measure Theory, Harmonic Analysis, Nonlinear Analysis and PDEs, Nonlinear Potential Theory, and Free Boundary Problems (see also \cite{AdaSRT18}, \cite{ART15}, \cite{daSDosP18}, \cite{daST17}, \cite{Tei14}, \cite{T15} and \cite{Tei18} for further current examples this subject).

 It is worth highlighting that our results generalize and/or recover and/or complement previous ones (see \cite{Tei4} (dead-core problems with uniformly elliptic structure), \cite[Section 4]{Tei2} ($p-$dead core problems) and \cite{Tei18} (\textit{quenching type} problems), see also \cite{ALT}) by allowing deal with more general classes of operators. Moreover, different from the former results, we suggest different approaches and techniques to address such regularity issues, which enable us to establish a number of significant applications for our findings.

 Now, let us connect our results with certain equations from the theory of superconductivity. To that end, consider the simple prototype $G(x, Du, D^2 u) = |Du|^{\gamma} F(x, D^2 u)$, where $\gamma>0$ and $F: \Omega \times \text{Sym}(N) \to \R$ is a $(\lambda, \Lambda)-$elliptic operator. In face of \eqref{Maineq} we observe that
$$
   \{|Du|=0\} \cap \Omega \subset \{u = 0\} \cap \Omega.
$$
Hence, instead of \eqref{DCP1} we may consider the most realistic accuracy (physical-mathematical) model
\begin{equation}\label{EqSUpercond}
     |Du|^{\gamma} F(x, D^2 u) = f(u)\chi_{\{|Du|>0\}}(x),
\end{equation}
which can be naturally thought, to some extent, as a more general prototype (with the adding of a degeneracy factor) in the theory of superconductivity (i.e. as $\gamma = 0$), where fully nonlinear equations with patches of zero gradient drives the mathematical model. Recall (see \cite{CafSal}, \cite{CafSalShah} and \cite{Chap}) that \eqref{EqSUpercond} (for $\gamma = 0$) stands the stationary situation for the mean field theory of superconductivity (vortices), provided the scalar stream function admits a functional dependence on the scalar magnetic potential. In this scenario, it was proved (see \cite[Corollary 7]{CafSal}) that viscosity solutions to \eqref{EqSUpercond} are merely $C^{0, \alpha}$ for some $\alpha \in (0, 1)$. Moreover, under the concavity assumption on $F$ was proved (see \cite[Corollary 8]{CafSal}) that solutions are in $W^{2, p}$. Nevertheless, even for $\gamma = 0$, our results (see Theorem \ref{IRThm}) are striking, because in this setting, we obtain $C^{\frac{2}{1-\mu}}$ local regularity along free boundary points, which represents an unexpected gain of smoothness along gradient degenerate points (compare also with \cite[Theorem 4.1]{ART15} and \cite[Theorem 1 and Theorem 3]{Tei14}).

 Thanks to Theorem \ref{IRThm} (resp. Theorem  \ref{IRresult2}) we are able to access better regularity estimates (at free boundary points) than those currently available. As a matter of fact, in our approach we impose just continuous coefficients for the governing operator $F$. Nevertheless, the modulus of continuity improves upon the expected H\"{o}lder regularity coming from Krylov-Safonov type estimates (see \cite{DFQ2} and \cite{IS16}). Furthermore, even for constant coefficient problems, $F(Du, D^2 u)= f(x, u) \in L^{\infty}$, our result is remarkable, because in this scenario $C_{\text{loc}}^{1, \min\left\{\alpha_{\mathrm{H}}^{-}, \frac{1}{1+\gamma}\right\}}$-estimates are the best expected regularity (see \cite{ART15}, \cite{BD2}, \cite{BD3}, \cite{C89}, \cite{CC95} and \cite{IS}). By way of illustration, taking into account the model $\mathrm{G}(x, Du, D^2 u) = |Du|^{\gamma} F(x, D^2 u)$, with $F$ a concave/convex operator, it was proved in \cite[Corollary 3.2]{ART15} sharp $C_{\text{loc}}^{1, \frac{1}{\gamma+1}}$ regularity estimates for degenerate elliptic equations. Nevertheless, Theorem \ref{IRThm} (resp. Theorem \ref{IRresult2}) presents a sharp/improved modulus of continuity (at free boundary points), i.e.
$$
   \kappa(\gamma, \mu) = \frac{\gamma+2}{\gamma+1-\mu} \geq 1+\frac{1}{\gamma+1} \quad \left(\text{resp.}\,\,\frac{1+\mu}{\gamma+1-\mu} \geq \frac{1}{\gamma+1}\right) \,\,\,\,(\text{sharp and improved  exponents}).
$$

Finally, our paper is organized as follows: in Section \ref{Section2} we present an appropriate notion of viscosity solutions to our context. Yet in Section \ref{Section2}, we deliver few results about \textit{a priori} estimates, comparison and existence of dead core solutions. Section \ref{Geommeasprop} is devoted to proving some sharp non-degeneracy results, in particular Theorem \ref{LGR}. In Section \ref{ImpRegEst}, we  prove a central result, namely Lemma \ref{FlatLemma}, which allows us to place solutions in a flatness improvement regime. Yet in Section \ref{ImpRegEst} we deliver a proof of Theorem \ref{IRThm}. At the end of Section \ref{ImpRegEst}, we analyze the borderline case, i.e., Theorem \ref{UCPthm}. In Section \ref{Cons} is dedicated to  applications of the our main results. Moreover, some weak geometric properties such as uniform positive density and porosity of the free boundary are established. In Section \ref{Sec6} we prove our Liouville type result, i.e. Theorem \ref{Liouville}.

\section{Background results}\label{Section2}

Let us review the definition of viscosity solution for our operators. For $\mathrm{G}: \Omega \times (\R^N \setminus \{0\}) \times Sym(N) \to \R$ (fulfilling (F1)-(F2)) and $f: \Omega \times \R \to \R$ continuous functions we have the following:

\begin{definition}[{\bf Viscosity solutions}] $u \in C^{0}(\Omega)$ is a viscosity super-solution (resp. sub-solution) to
$$
    \mathrm{G}(x, Du, D^2 u) = f(x, u) \quad \mbox{in} \quad \Omega
$$
if for every $x_0 \in \Omega$ we have the following
\begin{enumerate}
  \item Either $\forall$ $\phi \in C^2(\Omega)$ such that $u-\phi$ has a local minimum at $x_0$ and $|D\phi(x_0)|\neq 0$ holds
      $$
      \mathrm{G}(x_0, D\phi(x_0), D^2 \phi(x_0)) \leq  f(x_0, \phi(x_0)) \quad (resp. \,\, \geq f(x_0, \phi(x_0)))
      $$
  \item Or there exists an open ball $B(x_0, \varepsilon)    \subset \Omega$, $\varepsilon >0$ where $u$ is constant, $u = K$ and holds
      $$
      f(x, K) \geq 0 \,\,\,\,\forall \,\,\,\, x \in B(x_0, \varepsilon) \quad (\mbox{resp.} \,\,f(x, K) \leq 0)
      $$
Finally, $u$ is said to be a viscosity solution if it is simultaneously a viscosity super-solution and
a viscosity sub-solution.
\end{enumerate}
\end{definition}

The following Harnack inequality will be an important tool for our arguments.

\begin{theorem}[{\bf Harnack inequality, \cite[Theorem 1.1]{DFQ2} and \cite[Theorem 1.3]{IS16}}] \label{harnack}
Let $u$ be a non-negative viscosity solution to
$$
   \mathrm{G}(x, Du, D^2u) = f \in C^{0}(B_1) \cap L^{\infty}(B_1).
$$
Then,
\begin{equation*}
	  \displaystyle \sup_{B_{\frac{1}{2}}} u(x) \leq C\left(N, \gamma, \lambda, \Lambda, \|\mathfrak{b}\|_{L^{\infty}(\Omega)}\right)\left( \inf_{B_{\frac{1}{2}}} u(x)+\|f\|_{L^{\infty}(B_1)}^{\frac{1}{\gamma+1}}\right).
	\end{equation*}
\end{theorem}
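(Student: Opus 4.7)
The plan is to follow the Imbert--Silvestre / D\'avila--Felmer--Quaas programme, which adapts the classical Krylov--Safonov theory to operators that degenerate or become singular at the zero set of the gradient. The Harnack estimate splits into two halves: a weak Harnack (measure-type) inequality for non-negative supersolutions and a local maximum principle for non-negative subsolutions. Applying both to $u + \|f\|_{L^\infty(B_1)}^{1/(\gamma+1)}$ and combining them yields the stated inequality with the correct scaling exponent.

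The central technical input is an Alexandroff--Bakelman--Pucci (ABP) estimate tailored to operators satisfying (F1)--(F2). For a viscosity subsolution $v$, I would work with its concave envelope $\Gamma_v$ and exploit the fact that at every point of the contact set the gradient of $v$ coincides with the slope of the touching affine function. Consequently $|Dv|$ is quantitatively controlled in terms of $\sup v$ and the radius, so that the degenerate/singular factor $|Dv|^{\gamma}$ is bounded from above and below precisely where the standard Jacobian/area argument is performed. The Hamiltonian $|Du|^{\gamma}\langle\mathfrak{b}(x),Du\rangle$ is absorbed into the right-hand side because $|Du|^{\gamma+1}$ stays bounded on contact points, at the price of a constant depending on $\|\mathfrak{b}\|_{L^\infty(\Omega)}$. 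The homogeneity (F2) is what forces the exponent $1/(\gamma+1)$ on $\|f\|_{\infty}$ in the final ABP estimate.

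With ABP available, the weak Harnack follows from the familiar machinery: a point-to-measure estimate built from a smooth radial barrier (for which the degenerate operator can be evaluated classically), iterated via the Calder\'on--Zygmund dyadic cube decomposition to produce $|\{u>t\}\cap B_1|\le C t^{-\varepsilon}$ for some small universal $\varepsilon>0$. The local maximum principle for subsolutions,
\[
\sup_{B_{1/2}} u \le C\bigl(\|u\|_{L^{\varepsilon}(B_1)} + \|f\|_{L^{\infty}(B_1)}^{1/(\gamma+1)}\bigr),
\]
is then obtained by the standard rescaling/covering argument; here (F2) plays the same role, since rescaling the dependent variable introduces a factor $k^{-(\gamma+1)}$ on the right-hand side, which is exactly what dictates the scaling exponent in the final inequality.

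The main obstacle is the ABP step itself: uniform ellipticity fails precisely where the analysis of the concave envelope takes place. The remedy is to regularise $u$ by inf-convolution so that the gradient exists pointwise a.e., verify that the regularisation is semiconcave with a quantitative bound on its slope on the contact set, and pass to the limit carefully to recover the ABP estimate for the original viscosity subsolution. Once that step is secured, the remainder of the argument is a transposition of the Caffarelli--Cabr\'e proof of Harnack, with every estimate tracked through the degenerate factor $|Du|^{\gamma}$ and the homogeneity relation (F2).
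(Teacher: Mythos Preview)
The paper does not prove this theorem at all: it is stated in Section~\ref{Section2} as a background result, with the proof delegated entirely to the cited references \cite[Theorem 1.1]{DFQ2} and \cite[Theorem 1.3]{IS16}. There is therefore no ``paper's own proof'' to compare against.

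That said, your outline is faithful to the strategy employed in those references. The decisive observation --- that on the contact set of the concave envelope the gradient is quantitatively bounded, so the factor $|Du|^{\gamma}$ is harmless exactly where the ABP Jacobian estimate is carried out --- is precisely the mechanism Imbert--Silvestre exploit (and D\'avila--Felmer--Quaas in the singular case $\gamma<0$). One caveat: in the singular regime $-1<\gamma<0$ the gradient may vanish and the operator blows up, so the ``bounded from above and below'' claim for $|Dv|^{\gamma}$ on the contact set needs separate care; D\'avila--Felmer--Quaas handle this by first proving a H\"older estimate and then bootstrapping. Your sketch glosses over this distinction between the degenerate and singular cases, but as a roadmap for the degenerate case $\gamma\ge 0$ it is accurate.
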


The next theorem plays a fundamental role in obtaining sharp and improved estimates along free boundary points of solutions. Such a result can be found in the references \cite[Theorem 3.1]{ART15}, \cite[Theorem 1.1]{BD2}, \cite[Theorem 1.1]{BD3}, \cite[Theorem 2]{C89}, \cite[Section 8.2]{CC95} and \cite[Theorem 1]{IS}.

In order to access such estimates, we will assume (additionally) the following assumption on $F$:
\begin{enumerate}
\item[{\bf(F3)}]\label{F3}[{\bf Continuity condition}] There exists a modulus of continuity $\omega, : [0, \infty) \to [0, \infty)$ with $\omega(0) = 0$ such that for all $(x, y, \overrightarrow{p}, M) \in \Omega \times \Omega \times (\R^N \setminus \{0\}) \times Sym(N)$
$$
      |F(x, \overrightarrow{p}, M)-F(y, \overrightarrow{p}, M)| \leq \omega(|x-y|)|\overrightarrow{p}|^{\gamma}\|M\|.
$$
\end{enumerate}

\begin{theorem}[{\bf Gradient estimates}]\label{GradThm} Let $u$ be a bounded viscosity solution to
$$
  \mathrm{G}(x, Du, D^2u) = f \in L^{\infty}(B_1).
$$
Then,
\begin{equation*}
	  \displaystyle \sup_{B_\frac{1}{2}} |Du(x)| \leq C\left(N, \gamma, \lambda, \Lambda, \|\mathfrak{b}\|_{L^{\infty}(\Omega)}\right)\left( \sup_{B_1} u(x)+ \|f\|_{L^{\infty}(B_1)}^{\frac{1}{\gamma+1}}\right).
	\end{equation*}
\end{theorem}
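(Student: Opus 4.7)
My plan is to follow the Ishii--Lions doubling-of-variables method in the form adapted to degenerate/singular fully nonlinear equations by Imbert--Silvestre \cite{IS} and Birindelli--Demengel \cite{BD2, BD3}. First, a scaling reduction: setting
$K \defeq \max\bigl\{1,\, \|u\|_{L^\infty(B_1)},\, \|f\|_{L^\infty(B_1)}^{1/(\gamma+1)}\bigr\}$,
the rescaled operator $\widetilde{\mathrm{G}}(x,\vec p,M) \defeq K^{-(\gamma+1)}\,\mathrm{G}(x, K\vec p, KM)$ still satisfies (F1)--(F2) with the same structural constants, and $v \defeq u/K$ solves $\widetilde{\mathrm{G}}(x, Dv, D^2 v) = f/K^{\gamma+1}$ with $\|v\|_{L^\infty(B_1)} \le 1$ and right-hand side of $L^\infty$ norm at most $1$. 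It therefore suffices to prove $\|Dv\|_{L^\infty(B_{1/2})} \le C$ for such normalized solutions; by translation invariance, I may further reduce to a pointwise Lipschitz estimate at a fixed interior point $x_0 \in B_{1/2}$ and then cover $B_{1/2}$.

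For parameters $L \gg 1$ and $\sigma \ll 1$ to be selected, and a concave modulus $\omega \in C^2((0, s_0])$ with $\omega(s) \sim s$ near $0$ and $\omega''(s) \le -c_0 s^{\alpha_0 - 1}$ for some fixed $\alpha_0 \in (0,1)$ (for instance $\omega(s) = s - s^{1+\alpha_0}$ suitably truncated), I introduce
$$
\Phi(x,y) \defeq v(x) - v(y) - L\,\omega(|x-y|) - \sigma\bigl(|x-x_0|^2 + |y-x_0|^2\bigr),\qquad x, y \in \overline{B_{3/4}}.
$$
The target is to choose $L$ universal so that $\max \Phi \le 0$; setting $y = x_0$ and letting $\sigma \to 0^+$ then yields $v(x) - v(x_0) \le L\,\omega(|x-x_0|) \le L|x - x_0|$ in a neighbourhood of $x_0$. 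Arguing by contradiction, suppose the supremum is positive and attained at some interior pair $(\bar x, \bar y)$ with $\bar x \ne \bar y$. The Crandall--Ishii lemma furnishes $X, Y \in \text{Sym}(N)$ and a pseudo-gradient
$$
\vec q \defeq L\,\omega'(|\bar x - \bar y|)\,\frac{\bar x - \bar y}{|\bar x - \bar y|}
$$
for which $\widetilde{\mathrm{G}}(\bar x,\, \vec q + 2\sigma(\bar x - x_0),\, X) \ge -1$ and $\widetilde{\mathrm{G}}(\bar y,\, \vec q - 2\sigma(\bar y - x_0),\, Y) \le 1$, with $X - Y$ carrying one large negative eigenvalue of size $\sim L\,\omega''(|\bar x - \bar y|)$ in the direction of $\bar x - \bar y$ and the remaining eigenvalues controlled by $\mathcal{O}\bigl(L\,\omega'(|\bar x - \bar y|)/|\bar x - \bar y|\bigr)$.

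Subtracting the two viscosity inequalities, using (F1) to dominate the $F$-difference by $|\vec q|^\gamma \mathcal{P}^{-}_{\lambda, \Lambda}(X - Y)$ and absorbing the drift contribution via $\|\mathfrak{b}\|_{L^\infty}|\vec q|^{\gamma + 1}$, one arrives at an estimate of the form
$$
|\vec q|^\gamma \Bigl[\lambda\, c_0\, L\, |\bar x - \bar y|^{\alpha_0 - 1} - (N-1)\Lambda\, L - C\,\|\mathfrak{b}\|_{L^\infty} L\Bigr] \le 2 + o_\sigma(1).
$$
Since $|\vec q| \asymp L$ at the doubling maximum, the bracketed quantity dominates once $L$ is large enough in terms of $N, \lambda, \Lambda, \gamma, \|\mathfrak{b}\|_{L^\infty}$ alone, producing the contradiction. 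The principal technical obstruction is precisely the degeneracy/singularity of the prefactor $|\vec p|^\gamma$ at $\vec p = 0$: the classical uniformly elliptic Ishii--Lions argument cannot be applied directly because the effective modulus of ellipticity vanishes (or diverges, when $\gamma < 0$) at the critical set of $u$. The role of choosing $L$ large in the test function is precisely to force the \emph{a priori} lower bound $|\vec q| \gtrsim L$ on the pseudo-gradient at the doubling maximum, which restores genuine ellipticity there. For the singular range $-1 < \gamma < 0$ the factor $|\vec q|^\gamma$ blows up as $\vec q \to 0$, and one follows the refinement of \cite{BD2, BD3}, verifying that the positive contribution of $\omega''$ still dominates after multiplication by the singular prefactor.
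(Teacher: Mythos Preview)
The paper does not supply a proof of this theorem at all: it is quoted as a known result from \cite{ART15}, \cite{BD2}, \cite{BD3}, \cite{C89}, \cite{CC95} and \cite{IS}, and immediately before the statement the paper introduces the additional continuity hypothesis (F3) precisely because those references require it. Your Ishii--Lions doubling argument is exactly the method used in \cite{IS} and \cite{BD2, BD3}, so in that sense your proposal is aligned with the literature the paper is citing rather than offering an alternative route.

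One genuine omission in your sketch: when you subtract the two viscosity inequalities at the distinct points $\bar x$ and $\bar y$, you silently pass from $\widetilde{\mathrm{G}}(\bar x,\cdot,X)$ to $\widetilde{\mathrm{G}}(\bar y,\cdot,X)$ before applying (F1) to $X-Y$. This step requires the continuity assumption (F3), which contributes an error term of order $\omega(|\bar x-\bar y|)\,|\vec q|^{\gamma}\|X\|$; without (F3) the argument does not close. You should make this dependence explicit, since the paper singles out (F3) as an extra hypothesis needed only for this gradient estimate. A secondary imprecision: your displayed inequality records the tangential-curvature contribution as $(N-1)\Lambda L$, but the tangential eigenvalues of $D^2\bigl(L\omega(|\cdot|)\bigr)$ are $L\omega'(|\bar x-\bar y|)/|\bar x-\bar y|$, which for your choice of $\omega$ scales like $L/|\bar x-\bar y|$ rather than $L$. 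The contradiction still goes through because $|\bar x-\bar y| \lesssim L^{-1}$ forces the radial term $\lambda c_0 L|\bar x-\bar y|^{\alpha_0-1} \gtrsim L^{2-\alpha_0}$ to dominate, but the bookkeeping as written is not quite right.
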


The next result is pivotal in order to prove the existence of viscosity solutions for our problem, as well as in proving some weak geometric properties soon. The proof holds the same lines as \cite[Lemma 3.2]{ALT} (see also \cite[Theorem 1.1]{BerDem} and \cite[Theorem 2.1]{BD2}). Hence, we will omit the proof here.

\begin{lemma}[{\bf Comparison Principle}]\label{comparison principle} Let $u_1$ and $u_2$ be continuous functions in $\overline{\Omega}$ and $f \in C^0([0, \infty))$ increasing with $f(0)=0$ fulfilling
$$
    \mathrm{G}(x, Du_1, D^2u_1)-\lambda_0(x)f(u_1) \leq 0 \leq \mathrm{G}(x, D u_2, D^2 u_2)-\lambda_0(x)f(u_{2}) \quad  \text{ in } \quad \Omega
$$
in the viscosity sense. If $u_1 \geq u_2$ on $\partial \Omega$, then $u_1 \geq u_2$ in $\Omega$.
\end{lemma}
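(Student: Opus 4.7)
The approach I would take is the classical Jensen–Ishii doubling-of-variables argument, adapted to absorb the degeneracy/singularity in the gradient variable that is built into the operator $\mathrm{G}$. I would argue by contradiction, supposing $M^{\ast} := \max_{\overline{\Omega}}(u_{2} - u_{1}) > 0$; the boundary hypothesis then forces this maximum to be attained at some interior point $\hat{x} \in \Omega$. Choosing a power $q > 2$ large enough in terms of $\gamma$, I would introduce the penalization
\[
\Phi_{\varepsilon}(x,y) \;:=\; u_{2}(x) - u_{1}(y) - \frac{|x-y|^{q}}{q\,\varepsilon^{q-1}},
\]
and take $(x_{\varepsilon}, y_{\varepsilon})$ to be a maximizer of $\Phi_{\varepsilon}$ over $\overline{\Omega} \times \overline{\Omega}$. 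Standard estimates give $(x_{\varepsilon}, y_{\varepsilon}) \to (\hat{x}, \hat{x})$ with both points interior for $\varepsilon$ small, $|x_{\varepsilon} - y_{\varepsilon}|^{q} / \varepsilon^{q-1} \to 0$, and $u_{2}(x_{\varepsilon}) - u_{1}(y_{\varepsilon}) \to M^{\ast}$.

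Next I would invoke the Crandall–Ishii theorem of sums to produce matrices $X_{\varepsilon}, Y_{\varepsilon} \in \mathrm{Sym}(N)$ with $X_{\varepsilon} \leq Y_{\varepsilon}$, such that $(p_{\varepsilon}, X_{\varepsilon})$ lies in the closure of the superjet of $u_{2}$ at $x_{\varepsilon}$ and $(p_{\varepsilon}, Y_{\varepsilon})$ in the closure of the subjet of $u_{1}$ at $y_{\varepsilon}$, where the common ``penalty gradient'' is
\[
p_{\varepsilon} \;=\; \frac{|x_{\varepsilon} - y_{\varepsilon}|^{q-2}(x_{\varepsilon} - y_{\varepsilon})}{\varepsilon^{q-1}},
\]
with $\|X_{\varepsilon}\|, \|Y_{\varepsilon}\|$ controlled by a constant multiple of $|x_{\varepsilon} - y_{\varepsilon}|^{q-2}/\varepsilon^{q-1}$. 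Along the branch in which $p_{\varepsilon} \neq 0$, I would plug these jets into the two viscosity inequalities and subtract, obtaining
\[
\lambda_{0}(x_{\varepsilon}) f(u_{2}(x_{\varepsilon})) - \lambda_{0}(y_{\varepsilon}) f(u_{1}(y_{\varepsilon})) \;\leq\; \mathrm{G}(x_{\varepsilon}, p_{\varepsilon}, X_{\varepsilon}) - \mathrm{G}(y_{\varepsilon}, p_{\varepsilon}, Y_{\varepsilon}).
\]
The right-hand side would then be bounded using the continuity in $x$ of both $F$ and $\mathfrak{b}$, the monotonicity of $F$ in the matrix slot furnished by \textbf{(F1)} (which, since $Y_{\varepsilon} - X_{\varepsilon} \geq 0$, gives $F(x_{\varepsilon}, p_{\varepsilon}, X_{\varepsilon}) \leq F(x_{\varepsilon}, p_{\varepsilon}, Y_{\varepsilon})$), and the Ishii matrix bounds. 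Letting $\varepsilon \to 0$ should yield
\[
\lambda_{0}(\hat{x})\bigl[f(u_{2}(\hat{x})) - f(u_{1}(\hat{x}))\bigr] \;\leq\; 0,
\]
which, combined with $u_{2}(\hat{x}) > u_{1}(\hat{x})$ and the (strict) monotonicity of $f$ (if necessary, enforced via a preliminary perturbation $u_{2} \mapsto u_{2} - \delta$), gives the required contradiction.

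In the complementary branch $p_{\varepsilon} = 0$ (which forces $x_{\varepsilon} = y_{\varepsilon}$), I would invoke the ``constant clause'' in the definition of viscosity solution: locally $u_{2} \equiv K_{2}$ near $x_{\varepsilon}$ with $\lambda_{0} f(K_{2}) \geq 0$, hence $K_{2} \geq 0$, and $u_{1} \equiv K_{1}$ near the same point with $\lambda_{0} f(K_{1}) \leq 0$, hence $K_{1} \leq 0$; since $f(0) = 0$ and $f$ is increasing, the preliminary $\delta$-perturbation rules out $K_{2} > K_{1}$, closing this case as well.

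The main technical obstacle is the calibration of the exponent $q$ and the careful tracking of powers of $|x_{\varepsilon} - y_{\varepsilon}|$ and $\varepsilon$ on the right-hand side of the key inequality: the ellipticity estimate in \textbf{(F1)} produces a factor $|p_{\varepsilon}|^{\gamma}$, and in the singular regime $-1 < \gamma < 0$ the decay $p_{\varepsilon} \to 0$ makes this factor blow up. Only a sufficiently large $q$, which simultaneously produces enough smallness in $\|Y_{\varepsilon}\|$ and in $|x_{\varepsilon} - y_{\varepsilon}|$, forces the right-hand side to vanish in the limit; the degenerate regime $\gamma > 0$ is comparatively mild. This delicate $(\gamma, q)$-balance is precisely why the naive quadratic penalty $|x-y|^{2}/(2\varepsilon)$ is usually inadequate and why the reference to \cite[Lemma 3.2]{ALT}, where the same philosophy is implemented, is invoked to skip the routine bookkeeping.
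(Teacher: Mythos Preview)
The paper does not actually give a proof of this lemma: it states that ``the proof holds the same lines as \cite[Lemma~3.2]{ALT} (see also \cite[Theorem~1.1]{BerDem} and \cite[Theorem~2.1]{BD2})'' and omits it. Your outline is precisely the Jensen--Ishii doubling-of-variables scheme with a high-power penalty $|x-y|^{q}/(q\varepsilon^{q-1})$ that those references implement, so at the level of strategy you are fully aligned with what the paper intends.

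Two points in your sketch deserve tightening. First, in the branch $p_{\varepsilon}=0$ you have the signs reversed: $u_{2}$ is the \emph{sub}solution, so the constant clause of the definition gives $\lambda_{0}f(K_{2})\leq 0$, hence $K_{2}\leq 0$, while for the supersolution $u_{1}$ one gets $K_{1}\geq 0$; this yields $K_{2}\leq K_{1}$ directly, contradicting $u_{2}(x_{\varepsilon})-u_{1}(y_{\varepsilon})\to M^{\ast}>0$, without any $\delta$-perturbation. Second, and more substantively, having $x_{\varepsilon}=y_{\varepsilon}$ does \emph{not} by itself force either function to be locally constant: the dichotomy in the definition only tells you that the test-function clause is vacuous when the touching gradient vanishes, not that clause~2 must hold. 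The standard remedy in \cite{BerDem,BD2} is to shift the penalty to $|x-y-\delta e|^{q}/(q\varepsilon^{q-1})$ for a fixed unit vector $e$ and small $\delta>0$, which guarantees $x_{\varepsilon}\neq y_{\varepsilon}$ (hence $p_{\varepsilon}\neq 0$) and lets you stay entirely within the first branch; one then sends $\delta\to 0$ after $\varepsilon\to 0$. With that correction your argument goes through and matches the cited proofs.
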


Let us now comment on the existence of a viscosity solution of the Dirichlet problem \eqref{DCP1}. It follows by an application of Perron's method since a version of the Comparison Principle is available. In fact, let us consider functions $u^{\sharp}$ and $u_{\flat}$ that are solutions to the following boundary value problems:
\begin{equation}\nonumber
\begin{array}{ccc}
\left\{
\begin{array}{rcccc}
\mathrm{G}(x, D u^{\sharp}, D^2 u^{\sharp}) & = & 0 & \mbox{in} & \Omega, \\
u^{\sharp}(x) & = & g(x) &  \mbox{on} & \partial\Omega.\\
\end{array}
\right.
&
\mbox{and}
&
\left\{
\begin{array}{rllcc}
\mathrm{G}(x, D u_{\flat}, D^2 u_{\flat}) &=& \|g\|_{L^\infty(\partial\Omega)}^{\mu} & \mbox{in} & \Omega, \\
u_{\flat}(x) &=& g(x) &  \mbox{on} & \partial\Omega.\\
\end{array}
\right.
\\
\end{array}
\end{equation}
The existence of such solutions follows of standard arguments. Moreover, notice that $u^{\sharp}$ and $u_{\flat}$ are, respectively, super-solution and sub-solution to \eqref{DCP1}. Consequently, by Comparison Principle, Lemma \ref{comparison principle}, it is possible, under a direct application of Perron's method, to obtain the existence of a viscosity solution in $C(\overline{\Omega})$ to \eqref{DCP1}, more precisely we have the following theorem.

\begin{theorem}[{\bf Existence and uniqueness}]\label{ThmExist} Let $f \in C^0([0, \infty)) $ be a bounded, increasing real function with $f(0)=0$. Suppose that there exist a viscosity sub-solution $u_{\flat} \in C^0(\overline{\Omega}) \cap C^{0, 1}(\Omega)$ and a viscosity super-solution $u^{\sharp} \in C^0(\overline{\Omega}) \cap C^{0, 1}(\Omega)$ to $\mathrm{G}(x, Du, D^2 u) = f(u)$ satisfying
$u_{\flat} = u^{\sharp} = g \in C^^0(\partial \Omega)$. Define the class of functions
$$
     \mathrm{S}_{g}(\Omega) \defeq \left\{ v \in C^0(\overline{\Omega}) \;\middle|\; \begin{array}{c}
 v \text{ is a viscosity super-solution to } \\
\mathrm{G}(x, Du, D^2 u) = f(u) \text{ in } \Omega \text{ such that } u_{\flat} \le v \le u^{\sharp}\\
\text{ and } v = g \text{ on } \partial \Omega
\end{array}
\right\}.
$$
Then,
$$
   	u(x) \defeq \inf_{\mathrm{S}_{g}(\Omega)} v(x), \,\,\,\, \mbox{for} \,\, x \in \overline{\Omega}
$$
is (the unique) continuous (up to the boundary) viscosity solution to
$$
\left\{
\begin{array}{rclcc}
  \mathrm{G}(x, Du, D^2u) & = & f(u) & \mbox{in} & \Omega \\
  u(x) & = & g(x) & \mbox{on} & \partial \Omega.
\end{array}
\right.
$$
\end{theorem}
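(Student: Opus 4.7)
The plan is to implement the classical Perron method adapted to the degenerate/singular fully nonlinear setting, pivoting on the Comparison Principle (Lemma \ref{comparison principle}) which is already in hand. First, I would verify that the class $\mathrm{S}_g(\Omega)$ is nonempty and that the candidate $u(x) := \inf_{\mathrm{S}_g(\Omega)} v(x)$ is well-defined with $u_\flat \le u \le u^\sharp$: the membership $u^\sharp \in \mathrm{S}_g(\Omega)$ is immediate, and for any $v \in \mathrm{S}_g(\Omega)$ the Comparison Principle applied to $v$ and $u_\flat$ (using that $f$ is increasing, $f(0)=0$, and that $\lambda_0$ is bounded below) gives $v \ge u_\flat$, so the infimum inherits the enclosure.

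Next, I would show $u$ is a viscosity super-solution. This follows from the standard envelope argument: the infimum of a family of super-solutions, lower semicontinuous regularization $u_* = u$ modulo the usual stability properties of viscosity super-solutions under taking infima, yields the super-solution property. The $\gamma$-homogeneity and the definition's alternative clause (handling the case where the test function has vanishing gradient at the touching point) do not affect this half of the argument, because test functions touching from below with non-vanishing gradient are tested directly, while at constant-plateau touching points the relevant inequality $f(x,K)\ge 0$ transfers from any $v \in \mathrm{S}_g(\Omega)$ in a neighborhood.

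The heart of the proof — and the main obstacle — is proving $u$ is a viscosity sub-solution via the bumping argument, with extra care because $\mathrm{G}$ is not defined at $\overrightarrow{p}=0$. Suppose for contradiction that at some $x_0 \in \Omega$ the sub-solution condition fails; equivalently, there is $\phi \in C^2$ touching $u$ from above at $x_0$ with $\mathrm{G}(x_0, D\phi(x_0), D^2\phi(x_0)) < f(x_0, \phi(x_0))$ and $|D\phi(x_0)| \neq 0$ (the plateau alternative is handled symmetrically using that $f$ is increasing with $f(0)=0$). By continuity of $\mathrm{G}$ on $\Omega \times (\R^N \setminus \{0\}) \times \mathrm{Sym}(N)$ and of $f$, the strict inequality persists in a small ball $B_\rho(x_0)$ for the perturbed function $\phi_\varepsilon(x) := \phi(x) + \varepsilon - \delta|x-x_0|^2$ provided $\varepsilon, \delta$ are chosen suitably (so that $|D\phi_\varepsilon| \ge c >0$ throughout $B_\rho$). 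One then defines
\[
 \tilde{u}(x) := \begin{cases} \min\{u(x),\phi_\varepsilon(x)\}, & x \in B_\rho(x_0),\\ u(x), & x \in \Omega \setminus B_\rho(x_0),\end{cases}
\]
checks it is still a super-solution in the viscosity sense (using the glueing property, the strict inequality on $B_\rho$, and that $u\le u^\sharp$), equal to $g$ on $\partial\Omega$, and lying above $u_\flat$ by Comparison. Thus $\tilde{u} \in \mathrm{S}_g(\Omega)$, yet $\tilde{u}(x_0) < u(x_0)$, contradicting the definition of $u$ as the infimum.

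Finally, boundary continuity $u=g$ on $\partial\Omega$ follows from the sandwich $u_\flat \le u \le u^\sharp$ together with $u_\flat = u^\sharp = g$ on $\partial\Omega$ (with Lipschitz regularity of the barriers giving continuity up to $\partial\Omega$). For uniqueness, if $w$ is another continuous viscosity solution with boundary data $g$, applying Lemma \ref{comparison principle} in both directions yields $w \equiv u$ in $\Omega$. The one genuinely delicate technical step is the construction and verification of the bumped super-solution $\tilde{u}$ in the regime where the test function's gradient is small, but the freedom to add the concave perturbation $-\delta|x-x_0|^2$ keeps $|D\phi_\varepsilon|$ uniformly away from zero on $B_\rho$, restoring the classical argument.
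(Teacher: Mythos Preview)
Your approach coincides with the paper's: the text preceding Theorem~\ref{ThmExist} does not give a proof at all, but simply asserts that existence follows ``under a direct application of Perron's method'' once the Comparison Principle (Lemma~\ref{comparison principle}) and the barriers $u_\flat,\,u^\sharp$ are available, with uniqueness immediate from comparison. Your outline is precisely that implementation.

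One concrete slip in the bump step: with $\phi_\varepsilon(x)=\phi(x)+\varepsilon-\delta|x-x_0|^2$ you get $\phi_\varepsilon(x_0)=u(x_0)+\varepsilon>u(x_0)$, so $\tilde u(x_0)=\min\{u(x_0),\phi_\varepsilon(x_0)\}=u(x_0)$ and no contradiction results. The signs should be reversed, $\phi_\varepsilon=\phi-\varepsilon+\delta|x-x_0|^2$: then $\phi_\varepsilon(x_0)<u(x_0)$, while on $\partial B_\rho$ one has $\phi_\varepsilon\ge u+\delta\rho^2-\varepsilon>u$ for $\varepsilon<\delta\rho^2$, allowing the gluing. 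The $+2\delta I$ contribution to the Hessian is absorbed by the strict inequality $\mathrm{G}(x_0,D\phi(x_0),D^2\phi(x_0))<f(\phi(x_0))$ and continuity, for $\delta$ small. Note also that the quadratic term is not what keeps $|D\phi_\varepsilon|$ away from zero --- that is already guaranteed by $D\phi(x_0)\neq 0$ and continuity of $D\phi$ on a small ball; the quadratic is there solely to force $\phi_\varepsilon>u$ near $\partial B_\rho$.
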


\section{Non-degeneracy result}\label{Geommeasprop}

\hspace{0.6cm}This Section is devoted to proving a geometric non-degeneracy property that plays an essential role in the description of solutions to free boundary problems of dead core type.

\begin{proof}[{\bf Proof of Theorem \ref{LGR}}]
Notice that, due to the continuity of solutions, it is sufficient to prove that such an estimate is satisfied just at point within $\{u>0\} \cap \Omega^{\prime}$ for $\Omega^{\prime} \Subset \Omega$.

First of all, for $x_0 \in \{u>0\} \cap \Omega^{\prime}$ let us define the scaled function $u_r(x) \defeq \frac{u(x_0+rx)}{r^{\frac{\gamma+2}{\gamma+1-\mu}}}$ for $x \in B_1$.

Now, let us introduce the comparison function: $\displaystyle \Xi (x) \defeq \left[\inf_{\Omega} \lambda_0 \frac{ \left(\gamma+1 - \mu \right)^{\gamma + 2} }{N \Lambda\left(\mu+1 \right) \left( \gamma+2\right)^{\gamma + 1}}\right]^{\frac{1}{\gamma+1 - \mu}} |x|^{\frac{\gamma+2}{\gamma+1-\mu}}$.

Straightforward calculus shows that
$$
     \mathcal{G}(x, D \Xi , D^2 \Xi) + |D\Xi|^{\gamma}\langle\mathfrak{b}_r(x), D \Xi\rangle- \hat{\lambda}_0\left( x  \right).\Xi^{\mu}(x) \leq 0 \quad \text{in} \quad B_1
$$
and
$$
\mathcal{G}(x, D u_r , D^2 u_r) +|D u_r|^{\gamma}\langle\mathfrak{b}_r(x), D u_r\rangle - \hat{\lambda}_0\left( x  \right). (u_r)_{+}^{\mu}(x) \geq 0  \quad \text{in} \quad B_1
$$
in the viscosity sense, where
$$
    \mathcal{G}(x, \overrightarrow{p} , M) \defeq r^{\frac{\gamma-2\mu}{\gamma+1-\mu}}F\left(x_0+rx, \overrightarrow{p}, r^{-\frac{\gamma-2\mu}{\gamma+1-\mu}}M\right), \,\,\,\mathfrak{b}_r(x) = r\mathfrak{b}(x_0 + rx)  \quad \mbox{and} \quad \hat{\lambda}_0(x) \defeq \lambda_0(x_0 + rx)
$$
Moreover, $\mathcal{G}$ satisfies the same structural assumptions than $F$, namely (F1)-(F2).

Finally, if $u_r \leq \Xi$ on the whole boundary of $B_1$, then the Comparison Principle (Lemma \ref{comparison principle}), would imply that
$$
   u_r \leq \Xi \quad \mbox{in} \quad B_1,
$$
which clearly contradicts the assumption that $u_r(0)>0$. Therefore, there is a point $Y \in \partial B_1$ such that
$$
      u_r(Y) > \Xi(Y) = \left[\inf_{\Omega} \lambda_0 \frac{ \left(\gamma+1 - \mu \right)^{\gamma + 2} }{N \Lambda\left(\mu+1 \right) \left( \gamma+2\right)^{\gamma + 1}}\right]^{\frac{1}{\gamma+1 - \mu}}
$$
and scaling back we finish the proof of the Theorem.
\end{proof}

\section{Improved regularity estimates}\label{ImpRegEst}

\hspace{0.6cm}In this Section we prove our improved regularity result along their free boundaries.

\subsection{A geometric iterative approach}

Before proving the main theorem this section, let us establish a key result of our approach.

\begin{definition}\label{FineClass} For a fully nonlinear operator $F$ fulfilling (F1)-(F2), $0\leq \mu< \gamma+1$ and $0< \mathrm{m}\leq \lambda_0 \leq \mathcal{M}$ we say that $u \in \mathfrak{J}(F, \lambda_0, \mathfrak{b}, \mu)(B_{2r_0}(x_0))$ if
\begin{enumerate}
  \item[\checkmark] $0 \leq u \leq 1\quad \mbox{in} \quad B_{2r_0}(x_0) \quad  \text{and} \quad \mathfrak{b} \in C^0(\overline{B_{2r_0}(x_0)}, \R^N)$ such that $\left\|\mathfrak{b}\right\|_{L^{\infty}(\overline{B_{2r_0}(x_0)})} \leq 1$.
  \item[\checkmark] $u(x_0)=0$.
    \item[\checkmark] $F(x, Du , D^2 u) + |D u|^{\gamma}\langle\mathfrak{b}(x), D u\rangle = \lambda_0(x)u^{\mu}_{+}(x) \quad \mbox{in} \quad B_{2r_0}(x_0)$ in the viscosity sense.
\end{enumerate}
\end{definition}

Next result regards the first step of a machinery of sharp geometric decay, which is a powerful device in nonlinear (geometric) regularity theory and plays a pivotal role in our approach. The core idea was inspired in \cite{LeeShah}, as well as in the flatness reasoning from \cite{Tei4}. Notwithstanding, the general class of operators which we are dealing with, it imposes some significant adjusts in such strategies. As a matter of fact, by invoking a Harnack type inequality for general fully nonlinear elliptic equations, and by showing that under a suitable control of \textit{Thiele modulus} (with small enough bounds) solutions fall into a flatness improvement regime (near their free boundaries) (compare with \cite[Lemma 3.1]{OSS}, \cite[Lemma 4.3]{daSRS18} and \cite[Lemma 4]{Tei18}).

\begin{lemma}[{\bf Flatness improvement regime}]\label{FlatLemma} Given $0<\iota<1$, there exists a $\delta_{\iota} = \delta(N, \lambda, \Lambda, \gamma, \iota)>0$ such that if $u \in \mathfrak{J}(F, \lambda_0, \mathfrak{b}, \mu)(B_{2r_0}(x_0))$ with $\left\|\lambda_0\right\|_{L^{\infty}(B_{2r_0}(x_0))} \leq \delta_{\iota}$ then
$$
    \sup_{B_{r_0}(x_0)} u(x) \leq 1-\iota.
$$
\end{lemma}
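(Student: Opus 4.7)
The plan is to prove the lemma quantitatively, by rescaling to a unit ball and then applying the Harnack inequality (Theorem \ref{harnack}) directly; no contradiction/compactness argument is needed. First, I would rescale. Define $v(y) := u(x_0 + 2r_0 y)$ for $y \in B_1$. Using the $\gamma$-homogeneity (F2), the ellipticity (F1), and the fact that the Pucci extremal operators are positively $1$-homogeneous in the matrix argument, one checks that $v$ is a non-negative viscosity solution on $B_1$ of
$$\widetilde{F}(y, Dv, D^2 v) + |Dv|^{\gamma} \langle \widetilde{\mathfrak{b}}(y), Dv \rangle = \widetilde{\lambda}_0(y)\, v^{\mu},$$
where $\widetilde{F}(y, \vec{p}, M) := (2r_0)^{2}\, F\!\left(x_0 + 2r_0 y,\, \vec{p},\, (2r_0)^{-2} M\right)$ satisfies (F1)--(F2) with the \emph{same} structural constants $(\lambda, \Lambda, \gamma)$, while $\widetilde{\mathfrak{b}}(y) := 2r_0\, \mathfrak{b}(x_0 + 2r_0 y)$ and $\widetilde{\lambda}_0(y) := (2r_0)^{\gamma+2}\, \lambda_0(x_0 + 2r_0 y)$. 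Under the natural assumption $r_0 \leq 1/2$, we get $\|\widetilde{\mathfrak{b}}\|_{L^\infty(B_1)} \leq 1$ and $\|\widetilde{\lambda}_0\|_{L^\infty(B_1)} \leq \delta_\iota$; in addition $0 \leq v \leq 1$ and $v(0) = 0$.

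Next, I would apply Theorem \ref{harnack} to $v$ on $B_1$ with right-hand side $\tilde{f} := \widetilde{\lambda}_0\, v^{\mu}$. Since $v \leq 1$, one has $\|\tilde{f}\|_{L^\infty(B_1)} \leq \delta_\iota$, and the Harnack inequality yields
$$\sup_{B_{1/2}} v \;\leq\; C_{\star}\!\left( \inf_{B_{1/2}} v + \delta_\iota^{\,1/(\gamma+1)}\right)$$
for a universal constant $C_{\star} = C_{\star}(N, \gamma, \lambda, \Lambda) > 0$. Because $v(0) = 0$ and $0 \in B_{1/2}$, the infimum on the right-hand side vanishes. Hence $\sup_{B_{1/2}} v \leq C_{\star}\, \delta_\iota^{\,1/(\gamma+1)}$, which after unscaling reads $\sup_{B_{r_0}(x_0)} u \leq C_{\star}\, \delta_\iota^{\,1/(\gamma+1)}$.

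To conclude, I would fix $\delta_\iota := \bigl[(1-\iota)/C_{\star}\bigr]^{\gamma+1}$, which makes the previous bound exactly $\sup_{B_{r_0}(x_0)} u \leq 1 - \iota$. The main obstacle is the rescaling step: one must verify that $\widetilde{F}$ inherits (F1)--(F2) with the \emph{same} structural constants $(\lambda, \Lambda, \gamma)$, so that the Harnack constant $C_{\star}$ is genuinely universal (independent of $r_0$) and the resulting $\delta_\iota$ depends only on $N, \lambda, \Lambda, \gamma, \iota$ as the statement demands. This invariance is a consequence of the $1$-homogeneity of $\mathcal{P}^{\pm}_{\lambda,\Lambda}$ and the $\gamma$-homogeneity of $F$ in $\vec{p}$, but the powers of $r_0$ must be tracked carefully throughout the chain of substitutions to ensure the new drift and source coefficients absorb the geometric factors cleanly.
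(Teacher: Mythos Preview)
Your proof is correct and follows essentially the same approach as the paper: both apply the Harnack inequality (Theorem~\ref{harnack}) directly, use $u(x_0)=0$ to kill the infimum term, and then choose $\delta_\iota$ so that $C\,\delta_\iota^{1/(\gamma+1)} \le 1-\iota$. The only cosmetic difference is that you make the rescaling to the unit ball explicit (since Theorem~\ref{harnack} is stated on $B_1$), whereas the paper applies Harnack directly on $B_{2r_0}(x_0)$ with an implicitly scale-invariant constant; your extra hypothesis $r_0\le 1/2$ ensures $\|\widetilde{\mathfrak b}\|_{L^\infty}\le 1$ and is harmless, as in all applications of the lemma the paper takes $r_0\le 1/4$.
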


\begin{proof} Firstly, notice that
$$
   \inf_{B_{2r_0}(x_0)} u(x) = u(x_0) = 0 \quad \text{and} \quad \sup_{B_{2r_0}(x_0)} \lambda_0(u)^{\mu}_{+} \leq \delta_{\iota}.
$$
Now, by invoking the Harnack inequality (Theorem \ref{harnack}) we have that
$$
\begin{array}{rcl}
\displaystyle \sup_{B_{r_0}(x_0)} u(x)   & \leq  & \displaystyle C\left(N, \gamma, \lambda, \Lambda, \|\mathfrak{b}\|_{L^{\infty}(\overline{B_{2r_0}(x_0)})}\right)\left[ \inf_{B_{r_0}(x_0)} u(x)+\left(\sup_{B_{2r_0}(x_0)} \lambda_0(u)^{\mu}_{+}\right)^{\frac{1}{\gamma+1}}\right] \\
   & \le & C.\delta_{\iota}^{\frac{1}{\gamma+1}}.
\end{array}
$$
In particular, we conclude that
$$
   \displaystyle \sup_{B_{r_0}(x_0)} u(x) \leq 1-\iota
$$
provided $\delta_{\iota}< \left(\frac{1-\iota}{C}\right)^{\gamma+1}$. This finishes the proof.
\end{proof}


\begin{remark}[{\bf Reducing to normalization, scaling and flatness condition}]\label{rem1} In order to prove Theorem \ref{IRThm} we need to argue that it is possible to reduce the assumptions, in a not restrictive way, to the one of Lemma \ref{FlatLemma}. For this reason, for any $x_0 \in \partial \{u>0\} \cap \Omega^{\prime}$ with $\Omega^{\prime} \Subset \Omega$, let us define
$$
     v_1(x)\defeq \frac{u\left( x_0 + \mathrm{R} x\right)}{\tau} \quad \mbox{in} \quad B_{2r_0} ,
$$
for $\tau, \mathrm{R}>0$ constants to be determined universally \textit{a posteriori}.

From the equation satisfied by $u$, we easily verify that $v$ fulfills in the viscosity sense
$$
  \mathcal{G}(x, Dv_1, D^2 v_1) + |D v_1|^{\gamma}\langle\hat{\mathfrak{b}}(x), D v_1\rangle = \hat{\lambda}_0(x).(v_1)^{\mu}_{+}(x),
$$
where
\begin{itemize}
  \item $\mathcal{G}(x, \overrightarrow{p}, M) \defeq \frac{\mathrm{R}^2}{\tau}F\left(x_0 + \mathrm{R} x, \overrightarrow{p}, \frac{\tau}{\mathrm{R}^2}M\right)$;
  \item $\hat{\mathfrak{b}}(x) = \mathrm{R}\mathfrak{b}(x_0 + \mathrm{R}x)$;
  \item $\hat{\lambda}_0(x) \defeq \frac{\mathrm{R}^{\gamma+2}}{\tau^{\gamma+1-\mu}}\lambda_0(x_0 + \mathrm{R} x)$,
\end{itemize}
with $\mathcal{G}$ fulfilling the assumptions (F1)-(F2).

Now, let $\kappa_{\lambda_0, \mu}>0$ be the greatest universal constant such the Lemma \ref{FlatLemma} holds provided
$$
   \left\|\mathcal{G}(x, Dv_1, D^2 v_1) + |D v_1|^{\gamma}\langle\hat{\mathfrak{b}}(x), D v_1\rangle\right\|_{L^{\infty}(B_1)} \leq \kappa_{\lambda_0, \mu}.
$$
Then, we make the following choices in the definition of $v$:
$$
   \tau \defeq \max\left\{1, \|u\|_{L^\infty(\Omega)}, \|\lambda_0\|_{L^{\infty}(\Omega)}^{\frac{1}{\gamma+1-\mu}}\right\} \,\,\, \mbox{and} \,\,\, \mathrm{R} \defeq \min\left\{2r_0, \frac{\text{dist}(\Omega^{\prime}, \partial \Omega)}{2}, \frac{1}{\|\mathfrak{b}\|_{L^\infty(\Omega)}+1}, \kappa_{\lambda_0, \mu}^{\frac{1}{\gamma+2}}\right\}
$$
Finally, with such selections $v_1$ fits into the framework of Lemma \ref{FlatLemma}. Consequently, we obtain
\begin{equation}\label{eqite}
       \displaystyle \sup_{B_{r_0}} v_1(x) \leq r_0^{\frac{\gamma+2}{\gamma+1-\mu}}.
\end{equation}
\end{remark}

Finally, we are in a position to supply the proof of Theorem \ref{IRThm}.

\begin{proof}[{\bf Proof of Theorem \ref{IRThm}}]
Let $u$ be a non-negative and bounded viscosity solution to \eqref{Maineq}, $x_0 \in \partial\{u > 0\} \cap \Omega^{\prime}$ any free boundary point and $r_0 \defeq \min\left\{\frac{1}{4}, \frac{\dist(\Omega^{\prime}, \partial \Omega)}{4}\right\}$. For the universal $\delta_{\iota}>0$ from Lemma \ref{FlatLemma} with $\iota \defeq 1-r_0^{\frac{\gamma+2}{\gamma+1-\mu}}$ we consider $v_1$ as in the Remark \ref{rem1}. Thus we define:
$$
     v_2(x) \defeq \frac{v_1(r_0 x)}{r_0^{\frac{\gamma+2}{\gamma+1-\mu}}} \quad \mbox{in} \quad B_{2}.
$$
Since $v_1 \in \mathfrak{J}(F, \lambda_0, \mathfrak{b}, \mu)(B_{2r_0}(x_0))$ and using \eqref{eqite}, it follows
\begin{enumerate}
    \item[\checkmark] $F(x, Dv_2 , D^2 v_2) + |D v_2|^{\gamma}\langle\hat{\mathfrak{b}}(x), D v_2\rangle = \hat{\lambda_0}(x)(v_2)^{\mu}_{+}(x) \quad \mbox{in} \quad B_{2}$ in the viscosity sense.

    \item[\checkmark] $0 \leq v_2 \leq 1\quad \mbox{in} \quad B_{2} \quad  \text{and} \quad \hat{\mathfrak{b}} \in C^0(\overline{B_{2}}, \R^N)$ such that $\left\|\hat{\mathfrak{b}}\right\|_{L^{\infty}(\overline{B_{2}})} \leq 1$ and $\|\hat{\lambda_0}\|_{L^{\infty}(\overline{B_{2}})} \leq \delta_{\iota}$.
  \item[\checkmark] $v_2(0)=0$.
\end{enumerate}
Hence, we are able to apply again the Lemma \ref{FlatLemma} to $v_2$ and obtain, after re-scaling for original domain,
$$
     \displaystyle \sup_{B_{r_0^2}} v_1(x) \leq r_0^{2\left(\frac{\gamma+2}{\gamma+1-\mu}\right)}.
$$
By iterating inductively as previously, we get the following geometric decay estimate:
\begin{equation}\label{eqInd}
    \displaystyle \sup_{B_{r_0^j}} v_1(x) \leq r_0^{j\left(\frac{\gamma+2}{\gamma+1-\mu}\right)}.
\end{equation}
In order to finish, for any fixed radius $0<r \leq \frac{\mathrm{R}}{2}$, let us choose $j \in \mathbb{N}$ such that
$$
     r_0^{j+1} < \frac{r}{\mathrm{R}} \leq r_0^{j}.
$$
For this reason, we can estimate
$$
     \displaystyle \sup_{B_r(x_0)} u(x) \leq \tau. \sup_{B_{r_0^j}(x_0)} v_1(x).
$$
Consequently, due to the estimate \eqref{eqInd} we can conclude that
$$
\begin{array}{ccl}
  \displaystyle \sup_{B_r(x_0)} u(x) & \leq & \left(\frac{1}{r_0\mathrm{R}}\right)^{\frac{\gamma+2}{\gamma+1-\mu}}\tau.r^{\frac{\gamma+2}{\gamma+1-\mu}}  \\
   & = & C\left(N, \lambda, \Lambda, \gamma, \mu, \dist(Y_0, \partial \Omega), \|\lambda_0\|_{L^{\infty}(\Omega)}\right)\max\left\{1, \|u\|_{L^\infty(\Omega)}, \|\lambda_0\|_{L^{\infty}(\Omega)}^{\frac{1}{\gamma+1-\mu}}\right\}.r^{\frac{\gamma+2}{\gamma+1-\mu}}.
\end{array}
$$
\end{proof}

\vspace{0.3cm}

As one of the consequences of our findings, we are able to establish a finer control for any viscosity solution to \eqref{Maineq} close its free boundary. Such a kind of information is crucial in a number of quantitative features for many free boundary problems (see \textit{e.g.} \cite{ART17}, \cite{daSRT}, \cite{RT} and \cite{Tei18}). Precisely, we prove that (near their free boundaries) solutions decay like an appropriated power of $\dist(\cdot, \partial \{u>0\})$.

\begin{corollary}\label{CorNonDeg}
Let $u$ be a non-negative, bounded viscosity solution to \eqref{Maineq} in $\Omega$. Given $x_0 \in \{u>0\} \cap \Omega^{\prime}$ with $\Omega^{\prime} \Subset \Omega$,
then
$$
  u(x_0) \leq C^{\sharp}
   \dist(x_0, \partial \{u>0\})^{\frac{\gamma+2}{\gamma+1-\mu}},
$$
where $C^{\sharp}>0$ is a universal constant\footnote{Throughout this manuscript, we will refer to {\it universal constants} when they depend only on the dimension and structural properties of the problem, i.e. on $N, \lambda, \Lambda, \gamma, \mu$ and the bounds of $\lambda_0$ and $\|\mathfrak{b}\|_{L^{\infty}(\Omega)}$}.
\end{corollary}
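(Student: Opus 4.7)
The plan is to reduce the corollary to a single direct application of Theorem \ref{IRThm} at the nearest free boundary point, together with a trivial $L^{\infty}$ bound when $x_0$ sits far from $\partial\{u>0\}$. Set $d \defeq \dist(x_0, \partial\{u>0\})$. Since $x_0 \in \{u>0\}$ and $u$ is continuous, $\{u>0\}$ is open and so $d>0$; because $\partial\{u>0\}$ is closed in $\mathbb{R}^N$, there exists $Y_0 \in \partial\{u>0\}$ realizing this distance, i.e. $|x_0 - Y_0| = d$.

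To legitimately invoke Theorem \ref{IRThm} at $Y_0$, I would fix an intermediate subdomain $\Omega' \Subset \Omega'' \Subset \Omega$ (say a slight thickening of $\Omega'$) and let $r_0 > 0$ denote the universal threshold radius granted by Theorem \ref{IRThm} applied in $\Omega''$. Now I split according to the size of $d$. If $d < r_0$, then $Y_0 \in B_d(x_0) \subset \Omega''$, so $Y_0 \in \partial\{u>0\} \cap \Omega''$, and $x_0 \in B_{r_0}(Y_0) \cap \{u>0\}$. Applying Theorem \ref{IRThm} at $Y_0$ with $X = x_0$ gives immediately
$$
u(x_0) \le C^{\sharp} \max\left\{1, \|u\|_{L^{\infty}(\Omega)}, \|\lambda_0\|_{L^{\infty}(\Omega)}^{\frac{1}{\gamma+1-\mu}}\right\} d^{\frac{\gamma+2}{\gamma+1-\mu}}.
$$

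If instead $d \ge r_0$, I use the trivial estimate $u(x_0) \le \|u\|_{L^{\infty}(\Omega)}$ and rewrite
$$
u(x_0) \le \|u\|_{L^{\infty}(\Omega)} \cdot r_0^{-\frac{\gamma+2}{\gamma+1-\mu}} \cdot d^{\frac{\gamma+2}{\gamma+1-\mu}},
$$
which has the required form with an enlarged structural constant. Combining both cases and absorbing everything into a single universal constant (depending on $N, \lambda, \Lambda, \gamma, \mu$, the bounds of $\lambda_0, \mathfrak{b}$, the norm $\|u\|_{L^{\infty}(\Omega)}$, and $\dist(\Omega', \partial\Omega)$) delivers the desired inequality.

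I do not expect a genuine obstacle here: the statement is a cosmetic rephrasing of Theorem \ref{IRThm} in terms of $\dist(\cdot,\partial\{u>0\})$ rather than a specific free boundary point. The only minor subtlety is the intermediate-domain choice $\Omega''$, introduced so that the minimizer $Y_0$ is guaranteed to lie in a region where the improved regularity estimate is valid; this is a standard cushion argument and does not demand any fresh analytic input beyond Theorem \ref{IRThm}.
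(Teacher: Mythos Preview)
Your proposal is correct and follows essentially the same approach as the paper: pick the nearest free boundary point and apply Theorem \ref{IRThm} there. The paper's version is terser---it simply writes $u(x_0) \le \sup_{B_{\mathrm{d}}(x_0)} u \le \sup_{B_{2\mathrm{d}}(z_0)} u$ and invokes the sup-form of Theorem \ref{IRThm} without explicitly handling the cases $d \ge r_0$ or the cushion domain $\Omega''$; your treatment of those points is more careful but not a different idea.
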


\begin{proof}
Fix $x_0 \in   \{u > 0\} \cap \Omega^{\prime}$ and denote $\mathrm{d}\defeq  \dist(x_0, \partial \{u>0\})$. Now, select $z_0 \in \partial \{u>0\}$ a free boundary point which achieves the distance, i.e., $\mathrm{d} = |x_0-z_0|$. From  Theorem \ref{IRThm} we have that
$$
  \displaystyle u(x_0) \leq \sup_{B_{\mathrm{d}}(x_0)} u(x) \leq \sup_{B_{2\mathrm{d}}(z_0)} u(x) \leq C^{\sharp}\left(N, \lambda, \Lambda, \gamma, \mu, \|\lambda_0\|_{L^{\infty}(\Omega)}, \|\mathfrak{b}\|_{L^{\infty}(\Omega)}\right)\mathrm{d}^{\frac{\gamma+2}{\gamma+1-\mu}},
$$
which finishes the proof.
\end{proof}

\subsection{The case $\mu = \gamma +1$: Proof of Theorem \ref{UCPthm}}

In the next we shall analyse the ``extreme case'' obtained as $\mu \to {\gamma +1}$, in other words,
\begin{equation}\label{UCP}
     F(x, Du, D^2 u) + |D u|^{\gamma}\langle\mathfrak{b}(x), D u\rangle = \lambda_0(x)u_{+}^{\gamma+1}(x) \quad \mbox{in} \quad \Omega.
\end{equation}

By means a barrier argument for the critical equation \eqref{UCP} and by using the Theorem \ref{IRThm} we shall prove that a non-negative solution to \eqref{UCP} can not vanish at an interior point, unless it is identically zero. This is a sort of strong maximum principle result.

\begin{proof}[{\bf Proof of Theorem \ref{UCPthm}}]
The prove will follow by \textit{reductio ad absurdum}. For that purpose, suppose that $\{u = 0\}\subsetneqq \Omega$, and lets $x_0 \in \Omega$ such that $u(x_0) > 0$. We can suppose without loss of generality that
$$
   \mathrm{d}_0 \defeq \dist(x_0,\partial\{u>0\}) < \frac{1}{10}\dist(x_0,\partial \Omega).
$$
We have that $u$ is locally bounded due to Comparison Principle (Lemma \ref{comparison principle}). Now, for fixed values of $\mathcal{A}>0$ and $s>0$ (large enough) we define the following barrier function:
$$
   \Theta_{\mathcal{A}, s}(x) \defeq \mathcal{A}\frac{e^{-s \frac{|x-x_0|^2}{\mathrm{d}_0^2}}-e^{-s}}{e^{-\frac{s}{4}}-e^{-s}},
$$
for which, a straightforward calculation shows (in the viscosity sense) that
\begin{equation}\nonumber
\left\{
\begin{array}{rclcl}
\mathcal{G}[\Theta_{\mathcal{A}, s}](x) & \geq & 0& \mbox{in} & B_{\mathrm{d}_0}(x_0) \setminus B_{\frac{\mathrm{d}_0}{2}}(x_0) \\
\Theta_{\mathcal{A}, s} & = & \mathcal{A} & \mbox{in} &  \partial B_{\frac{\mathrm{d}_0}{2}}(x_0)\\
 \Theta_{\mathcal{A}, s} & = & 0 & \mbox{in} & \partial B_{\mathrm{d}_0}(x_0),
\end{array}
\right.
\end{equation}
where $\mathcal{G}[u](x) = F(x, Du, D^2 u) + |D u|^{\gamma}\langle\mathfrak{b}(x), D u\rangle - \lambda_0(x)u^{\gamma+1}_{+}(x)$. Notice that

\begin{equation}\label{gradTheta}
\inf\limits_{B_{\mathrm{d}_0(x_0)} \setminus B_{\frac{\mathrm{d}_0}{2}}(x_0)}|D\, \Theta_{\mathcal{A}, s} (x) | \geq \frac{\mathcal{A} s \mathrm{d}_0^{-1}  e^{-s}}{e^{-\frac{s}{4}}-e^{-s}} \geq \frac{10\mathcal{A}s e^{-s}}{\dist(x_0,\partial \Omega)(e^{-\frac{s}{4}}-e^{-s})}\defeq \iota^{\ast} >0.
\end{equation}

On the other hand, for any constant $\zeta>0$,  the barrier $\zeta \Theta_{\mathcal{A}, s}$ still being a sub-solution in $B_{\mathrm{d}_0(x_0)} \setminus B_{\frac{\mathrm{d}_0}{2}}(x_0)$. In consequence, we have in the viscosity sense
$$
   \mathcal{G}[\zeta  \Theta_{\mathcal{A}, s}](x) \leq 0 \leq \mathcal{G}[u](x) \quad \text{in} \quad B_{\mathrm{d}_0(x_0)} \setminus B_{\frac{\mathrm{d}_0}{2}}(x_0).
$$
Moreover, by taking $\zeta_0 \in(0,1)$ small enough such that $\displaystyle \zeta_0 \mathcal{A} \leq \inf_{B_{\frac{\mathrm{d}_0}{2}}(x_0)} u(x)$ we obtain
$$
    \zeta_0 \Theta_{\mathcal{A}, s} \leq u \quad \mbox{in} \quad \partial B_{\mathrm{d}_0}(x_0) \cup \partial B_{\frac{\mathrm{d}_0}{2}}(x_0).
$$
Thus, by using Comparison Principle (Lemma \ref{comparison principle}) we obtain that
\begin{equation}\label{cprin}
    \zeta_0\Theta_{\mathcal{A}, s} \leq  u \quad \mbox{in} \quad B_{\mathrm{d}_0}(x_0) \setminus B_{\frac{\mathrm{d}_0}{2}}(x_0).
\end{equation}
On the other hand, for any $\max\{0, \gamma\}<\mu< \gamma+1$ fixed, we can rewrite equation \eqref{UCP} as
$$
      F(x, Du, D^2 u) + |D u|^{\gamma}\langle\mathfrak{b}(x), D u\rangle = h(x)u_{+}^{\mu}(x) \quad \mbox{in} \quad B_1,
$$
where $h(x)\defeq \lambda_0(x)u_{+}^{\gamma+1-\mu}(x)$. Thus, for $z \in \partial B_{\mathrm{d}_0} \cap \partial\{u>0\}$ we obtain by invoking Theorem \ref{IRThm} that
$$
      \displaystyle \sup_{B_r(z)} u(x)  \leq C\left(N, \lambda, \Lambda, \gamma, \mu, \|\mathfrak{b}\|_{L^{\infty}(B_1)}, \|h\|_{L^{\infty}(B_1)}\right).r^{\frac{\gamma+2}{\gamma+1-\mu}} \leq C.r^{\gamma+2}.
$$
for $r \ll 1$, since $1>\gamma+1-\mu$. Now, since $\gamma+2>1$ we can select $0<r_0 \ll 1$ small enough such that
$$
    	C.r_0^{\gamma+2} \le \dfrac{1}{7} \zeta.\iota^{\ast}.r_0.
$$
Finally, according to sentences \eqref{gradTheta} and \eqref{cprin} we obtain
$$
  \zeta.\iota^{\ast}.r_0 \leq  \sup\limits_{B_{r_0}(z)} \zeta \cdot |\Theta(|x|)-\Theta(|z|)| \leq  \sup\limits_{B_{r_0}(z)} \zeta. \Theta(|x|)  \leq  \sup\limits_{B_{r_0}(z)} u(x) \le  C.r_0^{\gamma+2} \le \dfrac{1}{7} \zeta.\iota^{\ast}.r_0,
$$
which yield a contradiction. Therefore, either $u>0$ or $u \equiv 0$ in $\Omega$.
\end{proof}

\begin{example} Theorem \ref{UCPthm} assures that non-trivial viscosity solutions to \eqref{Maineq} cannot dead-core sets, i.e., they must be strictly positive, provided $\mu = \gamma+1$. Indeed, fixed a direction $i = 1, \cdots, N$ and $\lambda_0>0$ we have for $\mathfrak{b}(x) = \sqrt[\gamma+2]{\lambda_0}$ that $u(x) = e^{\sqrt[\gamma+2]{\lambda_0}. x_i}$ is a strictly positive viscosity solution to
$$
    |D u(x)|^{\gamma} \left(\Delta u(x) + \langle\mathfrak{b}(x), Du(x)\rangle \right)= \lambda_0.u^{\gamma+1}(x) \quad \mbox{in} \quad B_1.
$$
\end{example}

\section{Consequences and further results}\label{Cons}

\hspace{0.6cm}Throughout this section we will present further consequences arising from our main results.

An important application of the Theorem \ref{IRThm}, we obtain a finer gradient control to solutions of \eqref{Maineq} near their free boundary points (cf. \cite[Corollary 4.1]{daSO}, \cite[Lemma 3.8]{OSS} and \cite[Lemma 3.8]{daSRS18}).

\begin{proof}[{\bf Proof of Theorem \ref{IRresult2}}] Firstly, let $x_0 \in \partial \{u>0\} \cap \Omega^{\prime}$ be an interior free boundary point. Now, we define the scaled auxiliary function $\Phi: B_1 \to \R_{+}$ by: $\Phi(x) \defeq \frac{u(x_0+rx)}{r^\frac{\gamma+2}{\gamma+1-\mu}}.$ Notice that $\Phi$ fulfils in the viscosity sense
$$
  \mathcal{G}(x, D \Phi, D^2 \Phi) + |D u(x)|^{\gamma}\langle\hat{\mathfrak{b}}(x), Du\rangle = \hat{\lambda}_{0}(x)\Phi^{\mu}(x) \quad \text{in} \quad B_1,
$$
where
$$
  \mathcal{G}(x, \overrightarrow{p}, M) \defeq r^{\frac{\gamma-2\mu}{\gamma+1-\mu}}F\left(z+rx, \overrightarrow{p}, r^{-\frac{\gamma-2\mu}{\gamma+1-\mu}}M\right), \,\,\,\hat{\mathfrak{b}}(x) = r\mathfrak{b}(z+rx) \quad \text{and} \quad \hat{\lambda}_0(x) \defeq \lambda_0(z + r x).
$$

Moreover, $\mathcal{G}$ satisfies the same structural assumptions than $F$, namely (F1)-(F3). From Theorem \ref{IRThm} we get that $\displaystyle \sup_{B_1} \Phi(x) \leq C$.

Finally, by invoking the gradient estimates (Theorem \ref{GradThm}) we obtain that
$$
\begin{array}{rcl}
  \displaystyle  \frac{1}{r^{\frac{1+\mu}{\gamma+1-\mu}}} \sup_{B_{\frac{r}{2}}(x_0)} |D u(x)| & = & \displaystyle \sup_{B_{\frac{1}{2}}(x_0)} |D \Phi(y)| \\
   & \leq & \displaystyle C\left(N, \gamma, \lambda, \Lambda, \|\mathfrak{b}\|_{L^{\infty}(B_1)}\right).\left[\sup_{B_1} \Phi(x) + \mathcal{M}^{\frac{1}{\gamma+1}}\left(\sup_{B_1} \Phi(x)\right)^{\frac{\mu}{\gamma+1}}\right] \\
   & \leq & \displaystyle C\left(N, \gamma, \lambda, \Lambda, \|\mathfrak{b}\|_{L^{\infty}(B_1)}\right)\max\left\{C, C^{\frac{\mu}{\gamma+1}}\right\}\left(1+ \mathcal{M}^{\frac{1}{\gamma+1}}\right).
\end{array}
$$
which finishes the proof.
\end{proof}

As a result, we obtain the following result in terms of distance up to the free boundary (see also \cite[Corollary 5.1]{daSS18}). The proof is similar to the one employed in Theorem \ref{CorNonDeg}.

\begin{corollary}Let $u$ be a bounded non-negative viscosity solution to \eqref{Maineq} in $B_1$. Then, for any point $z \in   \{u > 0\} \cap B_{\frac{1}{2}}$, there exists a universal constant $C>0$ such that
$$
  \displaystyle  |D u(z)| \leq  C\dist(z, \partial \{u>0\})^{\frac{1+\mu}{\gamma+1-\mu}},
$$
\end{corollary}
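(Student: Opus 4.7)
The plan is to mimic the proof of Corollary \ref{CorNonDeg}, but using Theorem \ref{IRresult2} (sharp gradient decay at free boundary points) in place of Theorem \ref{IRThm}. The idea is that although Theorem \ref{IRresult2} is stated at a free boundary point, the fact that the estimate is a supremum over a ball of radius $r$ lets us transfer it to any interior point within that ball.

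First I would fix $z \in \{u>0\}\cap B_{1/2}$, set $\mathrm{d}\defeq \dist(z,\partial\{u>0\})$, and pick a touching point $z_0 \in \partial\{u>0\}$ with $|z-z_0|=\mathrm{d}$. To apply the free boundary gradient decay one needs $z_0$ to belong to some $\Omega'\Subset B_1$; taking $\Omega'=B_{3/4}$ works since $z\in B_{1/2}$ forces $\mathrm{d}\leq 1/2$ and hence $z_0 \in \overline{B_{3/4}}$. Next I observe $z \in B_{2\mathrm{d}}(z_0)$ by definition of $\mathrm{d}$, so
\[
|Du(z)| \;\leq\; \sup_{B_{2\mathrm{d}}(z_0)} |Du(x)|.
\]

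Then, for $\mathrm{d}$ small enough (i.e.\ $2\mathrm{d}\ll \min\{1, \dist(\Omega',\partial B_1)/2\}$), Theorem \ref{IRresult2} applied at the free boundary point $z_0$ with radius $r=2\mathrm{d}$ yields
\[
\sup_{B_{2\mathrm{d}}(z_0)} |Du(x)| \;\leq\; C\,(2\mathrm{d})^{\frac{1+\mu}{\gamma+1-\mu}} \;\leq\; C^{\sharp}\,\mathrm{d}^{\frac{1+\mu}{\gamma+1-\mu}},
\]
with $C^{\sharp}$ depending only on the universal parameters. For the complementary regime where $\mathrm{d}$ is of order one (bounded below by a universal constant), the desired inequality follows trivially from the global gradient estimate of Theorem \ref{GradThm} applied to $u$ on $B_{3/4}$, since $|Du|$ is then universally bounded and $\mathrm{d}^{\frac{1+\mu}{\gamma+1-\mu}}$ is bounded below by a positive universal constant. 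Combining the two regimes finishes the proof.

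I do not anticipate a serious obstacle here: the statement is essentially a distance-function rewriting of Theorem \ref{IRresult2}, and the only minor bookkeeping concerns are (i) choosing an auxiliary $\Omega'\Subset B_1$ containing the relevant touching points to legitimize the use of Theorem \ref{IRresult2}, and (ii) separating the cases ``$\mathrm{d}$ small'' versus ``$\mathrm{d}$ of order one'' so that the smallness condition $r\ll 1$ in Theorem \ref{IRresult2} is satisfied. Both are handled by absorbing the resulting constants into the final universal $C$.
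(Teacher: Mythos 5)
Your proof is correct and takes essentially the same route as the paper, which simply remarks that the argument is ``similar to the one employed in Corollary~\ref{CorNonDeg}'': fix $z$, take the nearest free boundary point $z_0$, observe $z\in B_{2\mathrm{d}}(z_0)$, and invoke Theorem~\ref{IRresult2} at $z_0$ with $r=2\mathrm{d}$. Your extra bookkeeping (choosing $\Omega'=B_{3/4}$ and splitting into the regimes $\mathrm{d}$ small vs.\ $\mathrm{d}$ of order one, the latter handled by the interior gradient bound of Theorem~\ref{GradThm}) is slightly more careful than what the paper spells out, but it is the same underlying argument.
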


Now, we will obtain some measure theoretical properties of the phase-transition. Next result establishes that the positiveness region enjoys uniform positive density property along the free boundary. In particular, the development of cusps along free boundary points is inhibited.

\begin{corollary}[{\bf Uniform positive density}]\label{UPDFB}Let $u$ be a nonnegative, bounded viscosity solution to \eqref{Maineq} in $B_1$ and $x_0 \in \partial \{u > 0\} \cap B_{\frac{1}{2}}$ a free boundary point. Then for any $0<\rho< \frac{1}{2}$,
$$
     \Leb(B_{\rho}(x_0) \cap\{u>0\})\geq \theta.\rho^N,
$$
for a constant $\theta>0$ that depends only upon universal parameters and $\|u\|_{L^{\infty}}$.
\end{corollary}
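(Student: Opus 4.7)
The plan is to combine the non-degeneracy estimate (Theorem \ref{LGR}) with the improved regularity along the free boundary (Theorem \ref{IRThm}), exploiting the matching exponent $\kappa \defeq \frac{\gamma+2}{\gamma+1-\mu}$ in both. Fix $x_0 \in \partial\{u>0\} \cap B_{1/2}$ and $0<\rho<\tfrac{1}{2}$. First, apply Theorem \ref{LGR} on the ball $B_{\rho/2}(x_0)$: since $x_0$ lies in the closure of $\{u>0\}$, there exists $y_\rho \in \overline{B_{\rho/2}(x_0)}$ with
$$
u(y_\rho) \;\geq\; C_\sharp\,(\rho/2)^{\kappa}.
$$
Next, apply Theorem \ref{IRThm} at the free boundary point $x_0$ (taking, say, $\Omega'=B_{3/4}$ so that $\dist(\Omega',\partial B_1)=1/4$) to every point in a concentric ball around $y_\rho$.

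The key geometric observation is that the pointwise lower bound at $y_\rho$ and the quantitative upper bound from Theorem \ref{IRThm} at any free boundary point $z$ are incompatible once $z$ is too close to $y_\rho$. Indeed, if $z \in \partial\{u>0\}\cap B_1$ and $y_\rho$ lies close enough to $\partial\Omega$ for the improved regularity to apply at $z$, then
$$
C_\sharp (\rho/2)^\kappa \;\leq\; u(y_\rho) \;\leq\; C^\sharp\,\mathfrak{K}\,|y_\rho - z|^{\kappa},
$$
where $\mathfrak{K}\defeq \max\{1,\|u\|_{L^\infty},\|\lambda_0\|_{L^\infty}^{1/(\gamma+1-\mu)}\}$. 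Solving for $|y_\rho-z|$ yields a universal lower bound $|y_\rho - z|\geq c_1\rho$ with $c_1 = \tfrac{1}{2}(C_\sharp/(C^\sharp \mathfrak{K}))^{1/\kappa}$, which depends only on universal parameters and $\|u\|_{L^\infty}$. Setting $r \defeq \min\{c_1, 1/2\}\,\rho/2$, we conclude that $B_r(y_\rho)$ contains no free boundary point and is contained in $B_\rho(x_0)$.

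To finish, observe that $B_r(y_\rho)\cap\{u>0\}$ is open in $B_r(y_\rho)$, while its complement $B_r(y_\rho)\cap\{u=0\}$ is also open in $B_r(y_\rho)$ (its topological boundary would consist of free boundary points, of which there are none in $B_r(y_\rho)$). By connectedness of $B_r(y_\rho)$ and the fact that $u(y_\rho)>0$ (if $y_\rho \in \partial B_{\rho/2}(x_0)$, use continuity to pass to an interior point where $u$ is still bounded below by, say, half the value), one of the two sets is empty; hence $B_r(y_\rho)\subset\{u>0\}$. Therefore,
$$
\Leb\!\left(B_\rho(x_0)\cap\{u>0\}\right)\;\geq\;\Leb(B_r(y_\rho))\;=\;\omega_N\,r^N\;=\;\theta\,\rho^N,
$$
with $\theta \defeq \omega_N(\min\{c_1,1/2\}/2)^N>0$ depending only on universal parameters and $\|u\|_{L^\infty}$.

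The main obstacle is purely bookkeeping: making sure the ball $B_r(y_\rho)$ lies inside an admissible interior subdomain so that Theorem \ref{IRThm} can be invoked with controlled constants, and dealing with the degenerate case in which the non-degeneracy maximum is achieved on the boundary of $B_{\rho/2}(x_0)$ (handled by a harmless shrinking of the radius). No further analytical input is needed — the proof is a clean combination of the sharp exponent matching between non-degeneracy and improved regularity at free boundary points.
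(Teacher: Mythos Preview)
Your proposal is correct and follows essentially the same approach as the paper: combine the non-degeneracy lower bound (Theorem \ref{LGR}) with the improved regularity upper bound (Theorem \ref{IRThm}) at matching exponent $\kappa=\frac{\gamma+2}{\gamma+1-\mu}$ to force a definite ball around the near-maximizer $y_\rho$ to lie inside $\{u>0\}$. Your version is in fact slightly tidier than the paper's in two respects: you apply non-degeneracy on $B_{\rho/2}(x_0)$ (rather than $B_\rho(x_0)$) so that the resulting ball $B_r(y_\rho)$ is automatically contained in $B_\rho(x_0)$, and you make the connectedness argument explicit; the parenthetical worry about $y_\rho\in\partial B_{\rho/2}(x_0)$ is unnecessary, since $u(y_\rho)\geq C_\sharp(\rho/2)^\kappa>0$ already places $y_\rho$ in $\{u>0\}$.
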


\begin{proof}
Because of Theorem \ref{LGR}, for some $0<r<\frac{1}{2}$ fixed, it is possible to select a point $Y_0 \in  \overline{B_r(x_0)}$ such that,
\begin{equation}\label{dens}
    u(Y_0)= \sup\limits_{B_r(x_0)} u(x) \geq C_{\sharp}.r^{\frac{\gamma+2}{\gamma+1-\mu}}.
\end{equation}
In order to complete the proof the following inclusion
\begin{equation}\label{eq5.2}
    B_{\varsigma.r}(Y_0) \subset  \{u>0\}\cap B_1
\end{equation}
holds, for some $\varsigma>0$ (universal) small enough. In fact, from Theorem \ref{IRThm}, for $Z_0 \in \partial\{u>0\}$, we get
\begin{equation}\label{equap}
    u(Y_0) \leq C^{\sharp}.|Y_0 - Z_0|^{\frac{\gamma+2}{\gamma+1-\mu}}.
\end{equation}
Thus, by \eqref{dens} and \eqref{equap} we obtain
$$
     C_{\sharp}.r^{\frac{\gamma+2}{\gamma+1-\mu}} \leq C^{\sharp}.|Y_0-Z_0|^{\frac{\gamma+2}{\gamma+1-\mu}}
$$
and consequently,
$$
     \left(\dfrac{C_{\sharp}}{C^{\sharp}}\right)^{\frac{\gamma+1-\mu}{\gamma+2}}.r \leq |\,Y_0-Z_0|.
$$
For this reason, by taking $0< \varsigma \ll 1$ small enough, the inclusion in \eqref{eq5.2} is fulfilled. Therefore,
$$
     \Leb(B_{\rho}(x_0) \cap\{u>0\})\geq \Leb(B_{\rho}(X_0) \cap B_{\varsigma r}(Z_0))\geq \theta.r^N,
$$
\end{proof}

Next, we shall prove that the free boundary is a porous set. For the reader's convenience, we shall recall the definition of this notion.

\begin{definition}[{\bf Porous set}] A set $\mathrm{S} \subset \R^N$ is said to be porous with porosity constant $\epsilon \in (0, 1)$ if there is an $\mathrm{R}_0 > 0$ such that
$$
  \forall \,\,x \in \mathrm{S},\,\,\,\forall \,\,\, r \in \left(0, \mathrm{R}_0\right), \,\,\, \exists\, y \in \R^N\quad \text{such that} \quad B_{\epsilon r}(y) \subset B_r(x) \setminus \mathrm{S}.
$$
\end{definition}

\begin{corollary}[{\bf Porosity of the free boundary}]\label{CorPor} Let $u$ be a bounded, non-negative solution of \eqref{Maineq}. Then, there exists a universal constant $\xi>0$ such that
$$
     \mathcal{H}^{N-\xi}\left(\partial \{u>0\}\cap \Omega^{\prime}\right)< \infty.
$$
\end{corollary}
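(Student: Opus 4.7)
The plan is to show that $\partial\{u>0\} \cap \Omega'$ is a porous set in the sense of the definition above, with a \emph{universal} porosity constant $\varsigma > 0$, and then invoke the classical fact that porous subsets of $\mathbb{R}^N$ have strictly smaller Hausdorff dimension by a quantitative amount depending only on $N$ and the porosity constant. This transforms the regularity/non-degeneracy pair into a measure-theoretic statement.

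The porosity argument exploits directly the tension between the lower bound of Theorem \ref{LGR} and the upper bound of Theorem \ref{IRThm}. Fix any $x_0 \in \partial\{u>0\} \cap \Omega'$ and $r \in (0, R_0)$ for a universal $R_0 \ll \min\{1, \operatorname{dist}(\Omega', \partial\Omega)/2\}$. Theorem \ref{LGR} applied on $B_{r/2}(x_0)$ produces a point $y \in \overline{B_{r/2}(x_0)}$ with
$$
u(y) \;=\; \sup_{B_{r/2}(x_0)} u \;\geq\; C_\sharp \left(\tfrac{r}{2}\right)^{\kappa}, \qquad \kappa := \tfrac{\gamma+2}{\gamma+1-\mu}.
$$
Let $z \in \partial\{u>0\}$ realize the distance from $y$ to the free boundary, and set $d := |y-z|$. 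Theorem \ref{IRThm} applied at the free boundary point $z$ gives $u(y) \leq C^\sharp d^{\kappa}$. Combining the two bounds yields
$$
d \;\geq\; \varsigma\, r, \qquad \varsigma := \tfrac{1}{2}\left(\tfrac{C_\sharp}{C^\sharp}\right)^{1/\kappa},
$$
which is a universal constant. Consequently $B_{\varsigma r}(y) \subset \{u>0\}$ is entirely disjoint from $\partial\{u>0\}$; after possibly shrinking $\varsigma$ so that $\varsigma \leq 1/2$ we also have $B_{\varsigma r}(y) \subset B_r(x_0)$. This is exactly the porosity condition for $\partial\{u>0\} \cap \Omega'$ with (universal) porosity constant $\varsigma$ and radius $R_0$.

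With porosity in hand, the conclusion follows from the classical estimate that any set $S \subset \mathbb{R}^N$ which is porous with constant $\epsilon \in (0,1)$ satisfies
$$
\dim_{\mathcal{H}}(S) \;\leq\; N - c_N\, \epsilon^{N},
$$
for a purely dimensional constant $c_N > 0$ (see for instance the well-known results on porous sets in Mattila's book, or Koskela--Rohde and Martio--Vuorinen). Setting $\xi := c_N \varsigma^{N} > 0$, a standard covering argument on the compact set $\overline{\Omega'}$ upgrades the dimensional inequality to finiteness of the $(N-\xi)$-Hausdorff measure, yielding $\mathcal{H}^{N-\xi}(\partial\{u>0\} \cap \Omega') < \infty$.

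The key quantitative content — and the only real obstacle — is the precise matching of exponents in Theorems \ref{LGR} and \ref{IRThm}: both scale as $r^{\kappa}$ with the \emph{same} $\kappa = \tfrac{\gamma+2}{\gamma+1-\mu}$. It is this matching that makes the ratio $C_\sharp/C^\sharp$ produce a porosity constant $\varsigma$ that is independent of the scale $r$. Had the exponents differed, $\varsigma$ would degenerate as $r \to 0$ and no uniform porosity (hence no Hausdorff dimension gain) could be deduced. The passage from porosity to the Hausdorff dimension estimate is then a standard, purely geometric-measure-theoretic fact.
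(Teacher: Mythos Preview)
Your proof is correct and follows essentially the same approach as the paper: combine the non-degeneracy lower bound (Theorem \ref{LGR}) with the matching upper bound at free boundary points (Theorem \ref{IRThm}, or equivalently Corollary \ref{CorNonDeg}) to produce a universal porosity constant, then invoke Koskela--Rohde for the Hausdorff estimate. The only cosmetic difference is that you apply non-degeneracy on $B_{r/2}(x_0)$ so that $y$ automatically sits well inside $B_r(x_0)$, whereas the paper takes $y \in \partial B_r(x_0)$ and then shifts along the segment $[x_0,y]$ to a point $\hat{y}$ so that the porosity ball $B_{\epsilon r/2}(\hat{y})$ fits inside $B_r(x_0)$; both achieve the same end.
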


\begin{proof}
Let $R>0$ and $x_0\in\Omega$ be such that $\overline{B_{4R}(x_0)}\subset \Omega$. We will prove that $\partial \{u >0\} \cap B_R(x_0)$ is a $\frac{\epsilon}{2}$-porous set for some $\epsilon \in (0, 1]$. For this purpose, let $x\in \partial \{u >0\} \cap B_R(x_0)$. For each $r\in(0, R)$ we have $\overline{B_r(x)}\subset B_{2R}(x_0)\subset\Omega$. Now, let $y\in\partial B_r(x)$ such that $u(y)=\sup\limits_{\partial B_r(x)} u$. By Non-degeneracy (Theorem \ref{LGR})
\begin{equation}\label{5.1}
    u(y)\geq C_{\sharp}.r^{\frac{\gamma+2}{\gamma+1-\mu}}.
\end{equation}
On the other hand, near the free boundary (Corollary \ref{CorNonDeg})
\begin{equation}\label{5.2}
    u(y)\leq C^{\sharp}.\mathrm{d}(y)^{\frac{\gamma+2}{\gamma+1-\mu}},
\end{equation}
where $\mathrm{d}(y) \defeq \text{dist}(y, \partial \{u>0\} \cap \overline{B_{2R}(x_0)})$. Now, from \eqref{5.1} and \eqref{5.2} we get
\begin{equation}\label{5.3}
    \mathrm{d}(y)\geq\epsilon.r
\end{equation}
for a constant $0<\epsilon \defeq \left(\frac{C^{\sharp}}{C_{\sharp}}\right)^{\frac{\gamma+1-\mu}{\gamma+2}}\leq1$.

Consider now $\hat{y} \in[x,y]$ (the segment connection $x$ to $y$) such that $|y-\hat{y}|=\frac{\epsilon r}{2}$, then there holds
\begin{equation}\label{5.4}
   B_{\frac{\epsilon}{2}r}(\hat{y})\subset B_{\epsilon r}(y)\cap B_r(x).
\end{equation}
In effect, for each $z\in B_{\frac{\epsilon}{2}r}(\hat{y})$
$$
   |z-y|\leq |z-\hat{y}|+|y-\hat{y}|<\frac{\epsilon r}{2}+\frac{\epsilon r}{2}=\epsilon r,
$$
and
$$
   |z-x|\leq|z-\hat{y}|+\big(|x-y|-|\hat{y}-y|\big)\leq\frac{\epsilon r}{2}+\left(r-\frac{\epsilon r}{2}\right)=r,
$$
and \eqref{5.4} follows. Finally, since by \eqref{5.3} $B_{\epsilon r}(y)\subset B_{\mathrm{d}(y)}(y)\subset\{u>0\}$, then
$$
   B_{\epsilon r}(y)\cap B_r(x)\subset\{u>0\},
$$
which together with \eqref{5.4} implies
$$
   B_{\frac{\epsilon}{2}r}(\hat{y})\subset B_{\epsilon r}(y)\cap B_r(x)\subset B_r(x)\setminus\partial\{u>0\}\subset B_r(x)\setminus \partial\{u>0\} \cap B_R(x_0).
$$

Therefore, $\partial\{u>0\} \cap B_{R}(x_0)$ is a $\frac{\epsilon}{2}$-porous set. Finally, the desired $(N-\xi)$-Hausdorff measure estimate follows from \cite{KR}.
\end{proof}

\begin{remark}
 Particularly the Hausdorff estimate from Corollary \ref{CorPor} assures (see \cite{KR}) that
 $$
    \Leb(\partial\{u>0\} \cap \Omega^{\prime}) = 0.
 $$
\end{remark}

\section{A Liouville type result: Proof of Theorem \ref{Liouville}}\label{Sec6}

\hspace{0.6cm}The main purpose of this section is to prove that a global solution to
\begin{equation} \label{ecrn}
           F(x, Du, D^2 u) + |D u|^{\gamma}\langle\mathfrak{b}(x), Du\rangle =   \lambda_0(x) u_{+}^{\mu}(x) \quad \text{ in } \quad \R^N.
\end{equation}
must grow faster than $C|x|^{\frac{\gamma+2}{\gamma+1-\mu}}$ as $|x|\to \infty$ for a suitable constant $C>0$, unless it is identically zero.

For this end, fix $x_0 \in \R^N$, $\varsigma >0$ and $0<r_0<r$, we consider for $\rho<r$ (to be considered) the quantity $r_0=r-\rho$. Then, as in the proof of Theorem \ref{LGR}, it can be seen that the radially symmetric function $v: B_r(x_0) \to \R_+$ given by
$$
 \,v(x)=\Theta(N, \lambda_0, \gamma, \mu) (|x-x_0|-r_0)_+^{\,\frac{\gamma+2}{\gamma+1-\mu}}
$$
is a viscosity super-solution to
$$
   \left \{
       \begin{array}{rllll}
           F(x, Du, D^2  u) + |D u|^{\gamma}\langle\mathfrak{b}(x), Du\rangle & = & \lambda_0(x) u_{+}^{\mu}(x) & \text{ in } & B_r(x_0) \\
           u(x) & = & \varsigma &\text{ on } & \partial B_r(x_0)
\\
           u(x) & = & 0 &\text{ in } &  \overline{B_{r_0}(x_0)}
       \end{array},
   \right.
$$
where
$$
   \Theta(N, \Lambda, \lambda_0, \gamma, \mu) = \left[\inf_{\R^N}\lambda_0(x) \frac{ \left(\gamma+1 - \mu \right)^{\gamma + 2} }{N \Lambda\left(\mu+1 \right) \left( \gamma+2\right)^{\gamma + 1}}\right]^{\frac{1}{\gamma+1 - \mu}} \quad
\text{and} \quad  \quad \rho=\left( \frac{\varsigma}{\Theta(N,  \Lambda, \lambda_0, \gamma, \mu)} \right)^\frac{\gamma+1-\mu}{\gamma+2}.
$$

Such an explicit expression of $v$ allows us to prove our sharp (quantitative) Liouville type result.

\begin{proof}[{\bf Proof of Theorem \ref{Liouville}}]
Fixed $s_0>0$ (large enough), let us consider $w \colon \overline{B_{s_0}} \to \mathbb{R}$ the unique (see Theorem \ref{ThmExist}) viscosity solution to
\begin{equation}\label{ecc1}
\left\{
\begin{array}{rclcl}
  F(x, Dw, D^2 w) + |D \omega|^{\gamma}\langle\mathfrak{b}(x), D\omega\rangle & = & \lambda_0(x) w_{+}^{\mu}(x)& \text{ in } & B_{s_0}\\
  w(x) & = & \sup\limits_{\partial B_{s_0}} u(x) & \text{ on } & \partial B_{s_0}.
\end{array}
\right.
\end{equation}
According to the Comparison Principle (Lemma \ref{comparison principle}) $u \leq w \quad \mbox{in} \quad  B_{s_0}.$ Moreover, due to hypothesis \eqref{cond thm B1}
\begin{equation}\label{lim rad}
\sup\limits_{\partial B_{s_0}}\frac{u(x)}{s_0^{\frac{\gamma+2}{\gamma+1-\mu}}} \leq \sup\limits_{ B_{s_0}}\frac{u(x)}{s_0^{\frac{\gamma+2}{\gamma+1-\mu}}} \leq c\Theta(N,  \Lambda, \lambda_0, \gamma, \mu)
\end{equation}
for some $c \ll 1$ (small enough) and $s_0\gg 1$ (large enough). As above, the function
\begin{equation}\label{rad eq3}
  v(x)=\Theta(N,  \Lambda, \lambda_0, \gamma, \mu) \left(|x|-s_0 + \left(\frac{\sup\limits_{\partial B_{s_0}} u(x)}{\Theta(N,  \Lambda, \lambda_0, \gamma, \mu)}\right)^{\frac{\gamma+1-\mu}{\gamma+2}} \right)_+^{\frac{\gamma+2}{\gamma+1-\mu}}
\end{equation}
is a viscosity super-solution to \eqref{ecc1}. Thus, $w\leq v$ in $B_{s_0}$. Therefore, by  \eqref{lim rad} and \eqref{rad eq3} we conclude that
$$
   u(x) \leq \Theta(N,  \Lambda, \lambda_0, \gamma, \mu)  \left(|x|- (1-c^{\frac{\gamma+1-\mu}{\gamma+2}})s_0 \right)_+^{\frac{\gamma+2}{\gamma+1-\mu}} \to 0 \quad \mbox{as} \quad s_0 \to \infty.
$$
\end{proof}

\begin{example}
    It is worth highlighting that the constant in Theorem \ref{Liouville} is optimal in the sense that we can not remove the strict inequality in \eqref{cond thm B1}. In fact, the function given by
$$
    u(x) = \Theta(N,  \Lambda, \lambda_0, \gamma , \mu)(|x|-r_0)_{+}^{\frac{\gamma+2}{\gamma+1-\mu}}
$$
solves \eqref{ecrn} (for $\lambda_0(x) = \lambda_0$ a constant) and it clearly attains the equality in \eqref{cond thm B1} for the explicit value
$$
  \Theta(N,  \Lambda, \lambda_0, \gamma, \mu)  = \left[\lambda_0 \frac{ \left(\gamma+1 - \mu \right)^{\gamma + 2} }{N \Lambda\left(\mu+1 \right) \left( \gamma+2\right)^{\gamma + 1}}\right]^{\frac{1}{\gamma+1 - \mu}}.
$$
\end{example}

\subsection*{Acknowledgments}

\hspace{0.6cm}This work has been partially supported by CNPq (Brazilian government program \textit{Ci\^{e}ncia sem Fronteiras}), Consejo Nacional de Investigaciones Cient\'{i}ficas y T\'{e}cnicas (CONICET-Argentina), ANPCyT under grant PICT 2012-0153 and by PNPD-Capes (Universidade de Bras\'{i}lia - UnB). The authors would like to thank Eduardo V. Teixeira by his comments and suggestions that benefited a lot in the final outcome of this manuscript. J.V. da Silva and G.C. Ricarte would like to thank respectively \textit{Research Group on Partial Differential Equations} from Universidad de Buenos Aires and \textit{Analysis Research Group} of Centro de Matem\'{a}tica da Universidade de Coimbra for fostering a pleasant and productive scientific atmosphere during their Postdoctoral programs.


\begin{thebibliography}{99}

\scriptsize{

\bibitem{AP} Alt, H.W. and Phillips, D. \textit{A free boundary problem for semilinear elliptic equations}. J. Reine Angew. Math. 368 (1986), 63-107.\label{AP}

\bibitem{ADD16} \'{A}lvarez, L.,  D\'{i}az, G. and D\'{i}az, J.I. \textit{Some Qualitative Properties for Geometric Flows and its Euler Implicit Discretization}. Nonlinear Analysis 137 (2016) 43-76.\label{ADD16}

\bibitem{AdaSRT18} Amaral, M.D., da Silva, J.V., Ricarte, G.C. and Teymurazyan, R. \textit{Sharp regularity estimates for quasilinear evolution equations}. Israel J. Math. 231 (2019), no. 1, 25-45.

\bibitem{ALT} Ara\'{u}jo, D.,  Leit\~{a}o, R.A. and Teixeira, E. \textit{Infinity Laplacian equation with strong absorptions}.  J. Funct. Anal. 270 (2016), no. 6, 2249-2267.\label{ALT}

\bibitem{ART15} Ara\'{u}jo, D., Ricarte, G.C. and Teixeira, E. \textit{Geometric gradient estimates for solutions to degenerate elliptic equations},  Calc. Var. Partial Differential Equations 53 (2015), no. 3-4, 605-625.\label{ART15}

\bibitem{ART17} Ara\'{u}jo, D., Ricarte, G.C. and Teixeira, E. \textit{Singularly perturbed equations of degenerate type}.  Ann. Inst. H. Poincar\'{e} Anal. Non Lin\'{e}aire 34 (2017), no. 3, 655-678.\label{ART17}

\bibitem{Aris1} Aris, R. \textit{The Mathematical Theory of Diffusion and Reaction in Permeable Catalysts. Vol. I: The Theory of the Steady State}. Oxford: Clarendon Press; London: Oxford University Press. XVI, p.444 (1975).\label{Aris1}

\bibitem{Aris2} Aris, R. \textit{The Mathematical Theory of Diffusion and Reaction in Permeable Catalysts. Vol. II: Questions of uniqueness, stability, and transient behaviour}. Oxford: Clarendon Press; London: Oxford University Press. XVI, p. 217, (1975).\label{Aris2}

\bibitem{BdaL} Bardi, M. and Da Lio, F. \textit{On the strong maximum principle for fully nonlinear degenerate elliptic equations}. Arch. Math. (Basel) 73 (1999), no. 4, 276-285.

\bibitem{BerDem} Birindelli, I. and Demengel, F. \textit{ Comparison principle and Liouville type results for singular fully nonlinear operators}. Ann. Fac. Sci. Toulouse Math. (6) 13 (2004), no. 2, 261-287. .\label{BerDem}

\bibitem{BeDe1} Birindelli, I. and Demengel, F. \textit{The Dirichlet problem for singular fully nonlinear operators}.  Discrete Contin. Dyn. Syst. 2007, Dynamical systems and differential equations. Proceedings of the 6th AIMS International Conference, suppl., 110-121. ISBN: 978-1-60133-010-9; 1-60133-010-3.\label{BeDe1}

\bibitem{BD2} Birindelli, I. and Demengel, F. \textit{$C^{1, \beta}$ regularity for Dirichlet problems associated to fully nonlinear degenerate elliptic equations}. ESAIM Control Optim. Calc. Var. 20 (2014), no. 4, 1009-1024\label{BD2}

\bibitem{BD3} Birindelli, I. and Demengel, F. \textit{H\"{o}lder regularity of the gradient for solutions of fully nonlinear equations with sub linear first order term}. Geometric methods in PDE's, 257-268, Springer INdAM Ser., 13, Springer, Cham, 2015.\label{BD3}

\bibitem{C89} Caffarelli, L.A. \textit{Interior a priori estimates for solutions of fully nonlinear equations}. Ann. of Math. (2) 130 (1989), no. 1, 189-213.

\bibitem{CC95} Caffarelli, L.A. and Cabr\'{e}, X. \textit{Fully nonlinear elliptic equations}. American Mathematical Society Colloquium Publications, 43. American Mathematical Society, Providence, RI, 1995. vi+104 pp. ISBN: 0-8218-0437-5.\label{CC95}

\bibitem{CafSal} Caffarelli, L.A and Salazar, J. \textit{Solutions of fully nonlinear elliptic equations with patches of zero gradient: existence, regularity and convexity of level curves}.  Trans. Amer. Math. Soc. 354 (2002), no. 8, 3095-3115.\label{CafSal}

\bibitem{CafSalShah} Caffarelli, L.A, Salazar, J. and Shahgholian, H. \textit{Free-boundary regularity for a problem arising in superconductivity}.  Arch. Ration. Mech. Anal. 171 (2004), no. 1, 115-128.\label{CafSalShah}

\bibitem{Chap} Chapman, S.J. \textit{A mean-field model of superconducting vortices in three dimensions}. SIAM J. Appl. Math. 55 (1995), no. 5, 1259-1274.\label{Chap}

\bibitem{daSDosP18} da Silva, J.V. and dos Prazeres, D. \textit{Schauder type estimates for ``flat'' viscosity solutions to non-convex fully nonlinear parabolic equations and applications}. Potential Anal. 50 (2019), no. 2, 149-170..

\bibitem{daSO} da Silva J.V. and Ochoa, P. \textit{Fully nonlinear parabolic dead core problems}.  Pacific J. Math. 300 (2019), no. 1, 179-213.

\bibitem{OSS} da Silva, J.V., Ochoa, P. and Silva, A. \textit{Regularity for degenerate evolution equations with strong absorption}. J. Differential Equations 264 (2018), no. 12, 7270-7293.\label{OSS}

\bibitem{daSRT} da Silva, J.V., Ricarte, G.C. and Teymurazyan, R. \textit{Cavity type problems ruled by infinity Laplacian operator}. J. Differential Equations 262 (2017), no. 3, 2135-2157.\label{daSRT}

\bibitem{daSRS18} da Silva, J.V. Rossi, J.D. and Salort, A. \textit{Regularity properties for p-dead core problems and their asymptotic limit as $p \to \infty$}. J. Lond. Math. Soc. (2) 99 (2019), no. 1, 69-96.

\bibitem{daSS18} da Silva, J.V. and Salort, A. \textit{Sharp regularity estimates for quasi-linear elliptic dead core problems and applications}. Calc. Var. Partial Differential Equations 57 (2018), no. 3, 57: 83.

\bibitem{daST17} da Silva, J.V. and Teixeira, E. \textit{Sharp regularity estimates for second order fully nonlinear parabolic equations}. Math. Ann. 369 (2017), no. 3-4, 1623-1648.

\bibitem{DFQ2} D\'{a}vila, G., Felmer, P. and Quaas, A. \textit{Harnack inequality for singular fully nonlinear operators and some existence results}. Calc. Var. Partial Differential Equations 39 (2010), no. 3-4, 557-578.\label{DFQ2}

\bibitem{DD15} D\'{i}az, G. and D\'{i}az, J.I. \textit{Parabolic Monge-Amp\'{e}re equations giving rise to a free boundary: the worn stone model}. Dynamical Systems, Differential Equations and Applications, AIMS Proceedings, doi:10.3934/proc.2015.0369 2015 pp. 369-378.

\bibitem{Diaz} D\'{i}az, J.I. \textit{Nonlinear partial differential equations and free boundaries. Vol. I}. Elliptic equations. Research Notes in Mathematics, 106. Pitman (Advanced Publishing Program), Boston, MA, 1985. vii+323 pp. ISBN: 0-273-08572-7.\label{Diaz}

\bibitem{FriePhil} Friedman, A. and Phillips, D. \textit{The free boundary of a semilinear elliptic equation}.  Trans. Amer. Math. Soc. 282 (1984), no. 1, 153-182.\label{FriePhil}

\bibitem{IS} Imbert, C. and Silvestre, L. \textit{$C^{1, \alpha}$ regularity of solutions of degenerate fully non-linear elliptic equations}. Adv. Math. 233 (2013), 196-206.\label{IS}

\bibitem{IS16} Imbert, C. and Silvestre, L. \textit{ Estimates on elliptic equations that hold only where the gradient is large}. J. Eur. Math. Soc. (JEMS) 18 (2016), no. 6, 1321-1338.

\bibitem{LeeShah} Lee, K-A. and Shahgholian, H. \textit{Hausdorff measure and stability for the $p-$obstacle problem $(2 < p < \infty)$}. J. Differential Equations 195 (2003), no. 1, 14-24.\label{KS}

\bibitem{KR} Koskela, P. and Rohde, S. \textit{Hausdorff dimension and mean porosity}. Math. Ann. 309 (1997), no. 4, 593-609.\label{KR}

\bibitem{Phil} Phillips, D. \textit{Hausdorff measure estimates of a free boundary for a minimum problem}. Comm. Partial Differential Equations 8 (1983), no. 13, 1409-1454.\label{Phil}

\bibitem{RT} Ricarte, G. and Teixeira, E. \textit{ Fully nonlinear singularly perturbed equations and asymptotic free boundaries}. J. Funct. Anal. 261 (2011), no. 6, 1624-1673.\label{RT}

\bibitem{Tei2} Teixeira, E. \textit{Geometric regularity estimates for elliptic equations}. Mathematical Congress of the Americas, 185-201, Contemp. Math., 656, Amer. Math. Soc., Providence, RI, 2016.\label{Tei2}

\bibitem{Tei14} Teixeira, E. \textit{Regularity for quasilinear equations on degenerate singular set}.  Math. Ann. 358 (2014), no. 1-2, 241-256.

\bibitem{Tei4} Teixeira, E. \textit{Regularity for the fully nonlinear dead-core problem}. Math. Ann. 364 (2016), no. 3-4, 1121-1134.\label{Tei4}

\bibitem{T15} Teixeira, E. \textit{Hessian continuity at degenerate points in nonvariational elliptic problems}. Int. Math. Res. Not. IMRN 2015, no. 16, 6893-6906.

\bibitem{Tei18} Teixeira, E. \textit{Nonlinear Elliptic Equations with Mixed Singularities}.  Potential Anal. 48 (2018), no. 3, 325-335.\label{Tei18}
}
\end{thebibliography}
\end{document}